\newcommand{\reals}{\mathbb{R}}
\newcommand{\transpose}{^\textrm{\textnormal{T}}}
\DeclareMathOperator*{\argmin}{arg\,min}
\DeclareMathOperator*{\interior}{int}
\DeclareMathOperator{\gc}{getCBF}
\DeclareMathOperator*{\proj}{proj}
\newtheorem{theorem}{Theorem}
\newtheorem{assumption}{Assumption}
\newtheorem{lemma}{Lemma}
\newtheorem{example}{Example}
\newtheorem{definition}{Definition}
\newtheorem{proposition}{Proposition}
\newtheorem{remark}{Remark}
\newtheorem{problem}{Problem}
\newcommand{\nablaq}{{\nabla\hspace{-2pt}_q\hspace{0.5pt}}}
\newcommand{\nablav}{{\nabla\hspace{-2pt}_v\hspace{0.5pt}}}
\newcommand{\todo}[1]{\textcolor{red}{To Do: #1}}
\def\BibTeX{{\rm B\kern-.05em{\sc i\kern-.025em b}\kern-.08em
		T\kern-.1667em\lower.7ex\hbox{E}\kern-.125emX}}
 \newcommand{\regularversion}[1]{\iffalse #1 \fi}
 \newcommand{\extendedversion}[1]{{#1}}
\title{\fontsize{18}{20} \bf Compositions of Multiple Control Barrier Functions Under \\ Input Constraints}
\author{Joseph Breeden and Dimitra Panagou
	\thanks{This work was supported by the National Science Foundation Graduate Research Fellowship Program and the Fran\c{}ois Xavier Bagnoud Fellowship.}%
	\thanks{The authors are with the Department of Aerospace Engineering, University of Michigan, Ann Arbor, MI, USA. Email: \texttt{\{jbreeden,dpanagou\}@umich.edu}%
	}
}
\begin{document}
	
	\maketitle
	\thispagestyle{empty}
	
	\begin{abstract}
		This paper presents a methodology for ensuring that the composition of multiple Control Barrier Functions (CBFs) always leads to feasible conditions on the control input, even in the presence of input constraints.
		In the case of a system subject to a single constraint function, there exist many methods to generate a CBF that ensures constraint satisfaction. However, when there are multiple constraint functions, the problem of finding and tuning one or more CBFs becomes more challenging, especially in the presence of input constraints. This paper addresses this challenge by providing tools to 1) decouple the design of multiple CBFs, so that a CBF can be designed for each constraint function independently of other constraints, and 2) ensure that the set composed from all the CBFs together is a viability domain. Thus, a quadratic program subject to all the CBFs simultaneously is always feasible. The utility of this methodology is then demonstrated in simulation for a nonlinear orientation control system.
		
	\end{abstract}
	
	
	\section{Introduction}
	
	Control Barrier Functions (CBFs) are a control synthesis method for ensuring that system state trajectories always remain within some specified \emph{safe set} \cite{CBF_Tutorial}. In general, there may exist points within the safe set for which all feasible trajectories originating at these points will eventually exit the safe set. In such cases, one often seeks to construct a CBF so that a subset of the safe set, herein called the \emph{CBF set}, is rendered forward invariant, where the CBF set is a \emph{viability domain} (i.e. a controlled invariant set) for the given system dynamics and input bounds. 
	The problem of finding a CBF is equivalent to the problem of finding a description for such a viability domain.
	This in itself is a challenging problem, but for simple safe sets, i.e. safe sets that can be described as a sublevel set of a single \emph{constraint function}, several authors have proposed strategies to find CBFs as functions of the constraint function, including \cite{CBF_Tutorial,euler_lagrange_cbfs,Aircraft_backup_cbf,Recursive_CBF,Automatica} among others. 
	
	One open challenge in the CBF literature is that most works assume that the viability domain is sufficiently simple to be described by the zero sublevel (or superlevel) set of a single CBF. More complex viability domains could be expressed as the intersection of the zero sublevel sets of multiple CBFs{\color{black}, e.g. when each CBF represents a single obstacle in a cluttered environment}. This is sometimes achieved by taking a smooth \cite{CBFs_for_STL}, nonsmooth \cite{Nonsmooth_CBFs}, or adaptive \cite{adaptive_multi_cbf} maximum over several CBFs, or by simply applying multiple CBFs at once in a Quadratic Program (QP) \cite{ConstrainedRobots,Control_Sharing}. However, these approaches all assume that one is able to find a collection of CBFs whose CBF sets form a viability domain when intersected, or else the QP could become infeasible, as is illustrated {\color{black}by the two planar constraints} in Fig.~\ref{fig:example}. 
	Finding such a collection of CBFs is a much more challenging problem than finding a single CBF. The authors are not aware of any general algorithms analogous to \cite{CBF_Tutorial,euler_lagrange_cbfs,Recursive_CBF,Aircraft_backup_cbf,Automatica} for finding several CBFs at once, excepting learning approaches \cite{neural_high_relative_degree,robey2020learning}, which can only yield probabilistic safety guarantees.
	
	\begin{figure}
		\centering
		\begin{tikzpicture}
			\node[anchor=south west,inner sep=0] (image) at (0,0) {\includegraphics[width=0.43\columnwidth,clip,trim={1.2in, 0.1in, 1.3in, 0.3in}]{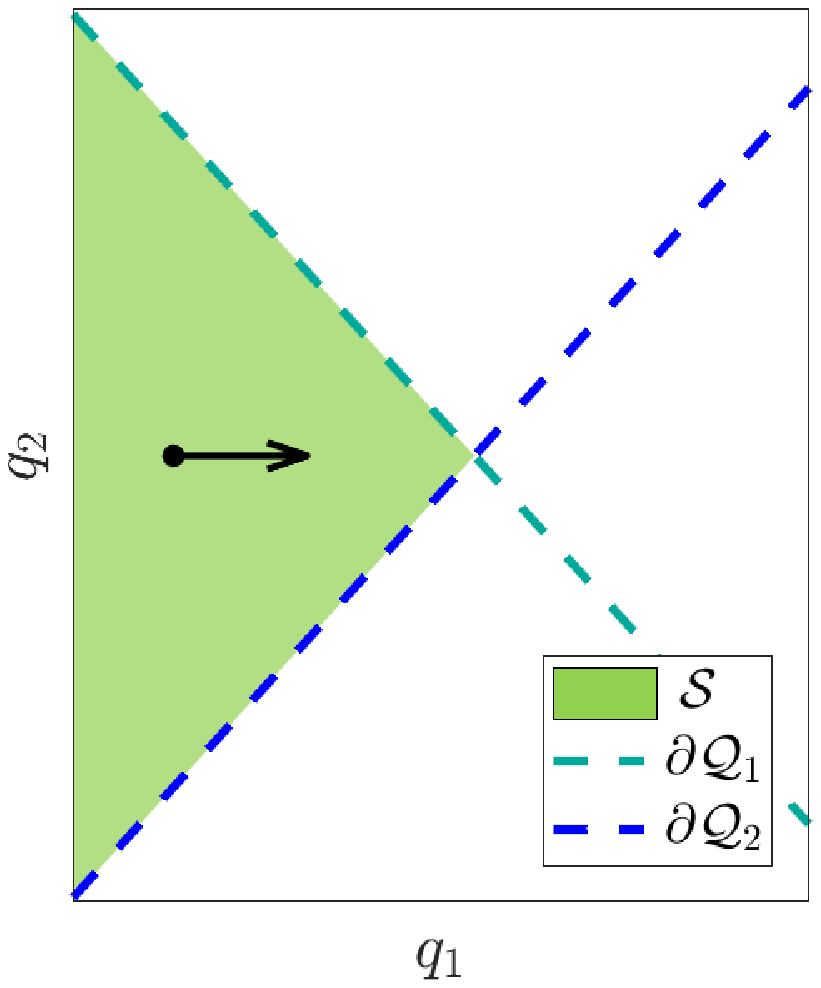}};
			\node[anchor=south east,inner sep=0] (image) at (\columnwidth,0) {\includegraphics[width=0.54\columnwidth,clip,trim={0.5in, 0.03in, 0.8in, 0.3in}]{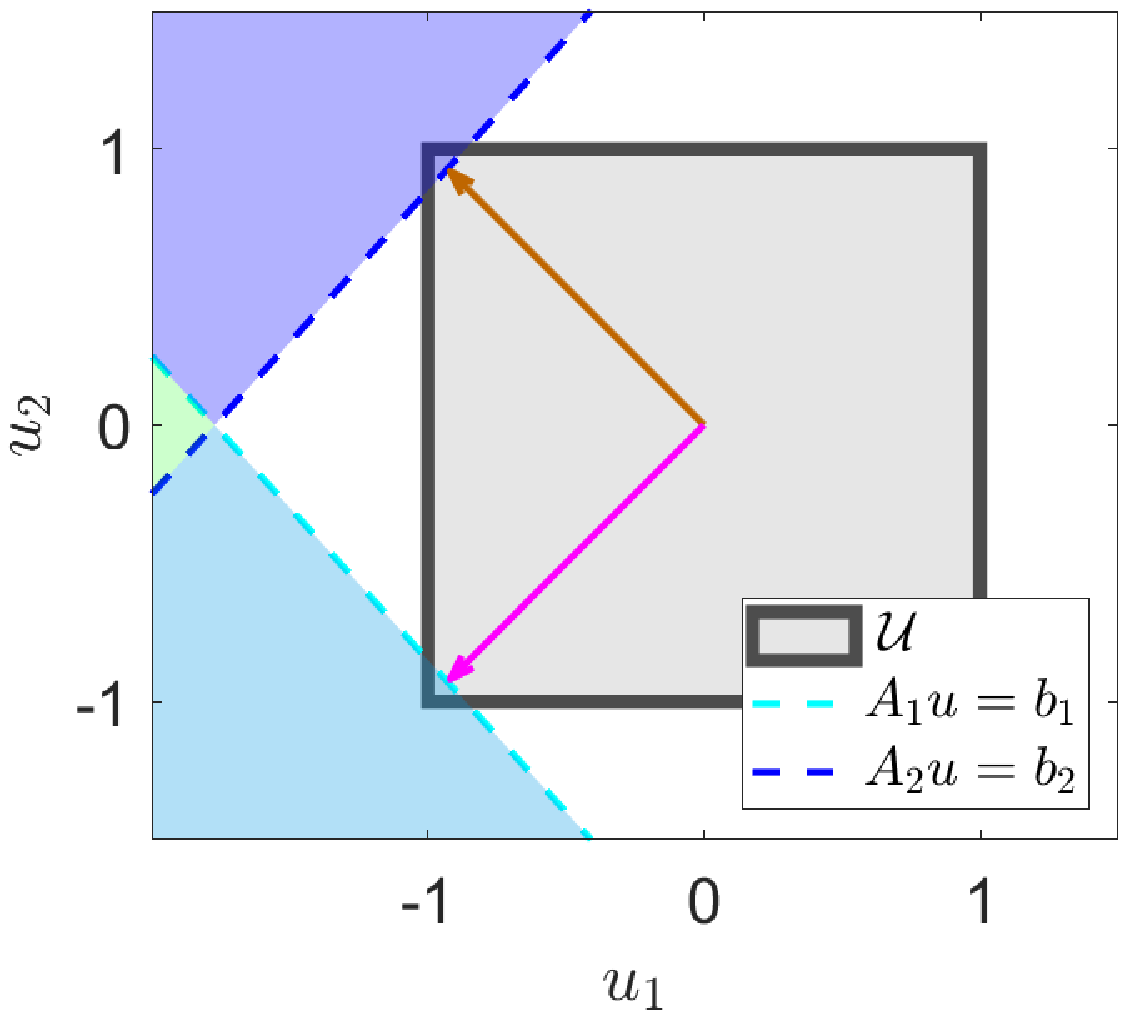}};
			\node [anchor=south west] (note) at (0.6,1.9) {$q$}; 
			\node [anchor=south west] (note) at (1.38,2.3) {$v$}; 
			\node [anchor=south west] (note) at (1.5,3.7) {\color{red}$\kappa_1 > 0$}; 
			\node [anchor=south west] (note) at (1.5,3.3) {\color{blue}$\kappa_2 \leq 0$};
			\node [anchor=south west] (note) at (1.2,0.8) {\color{JungleGreen}$\kappa_1 \leq 0$}; 
			\node [anchor=south west] (note) at (1.2,0.4) {\color{red}$\kappa_2 > 0$}; 
			%
			\node [anchor=south west] (note) at (0.31,2.85) 	{\small\color{JungleGreen}$\kappa_1\leq 0$}; 
			\node [anchor=south west] (note) at (0.31,2.5) {\small\color{blue}$\kappa_2\leq 0$}; 
			\node [anchor=south west] (note) at (2.45,2.3) {\color{red}$\kappa_1 > 0$}; 
			\node [anchor=south west] (note) at (2.45,1.9) {\color{red}$\kappa_2 > 0$};
			\node [anchor=south west] (note) at (4.28,4.0) {{CBF Conditions on $u$ at $(q,v)$}}; 
			\node [anchor=south west] (note) at (6,1.25) {\color{magenta}$w_1$}; 
			\node [anchor=south west] (note) at (6,3.05) {\color{BurntOrange}$w_2$}; 
			\node [anchor=south west] (note) at (4.45,3.65) {$A_2 u \leq b_2$}; 
			\node [anchor=south west] (note) at (4.45,0.67) {$A_1 u \leq b_1$}; 
		\end{tikzpicture}
		\caption{Left: Visualization of the safe positions ${q=(q_1,q_2)\in\reals^2}$ (green) for a double-integrator agent with two constraints $\kappa_1,\kappa_2$ as in Example~\ref{example}. 
		The state $x_0=(q,v)\in\reals^4$ in Example~\ref{example} has position $q$ labeled above and velocity $v$ pointing to the right. \\
		Right: Visualization of the control space at $x_0$ in Example~\ref{example}. Both CBF conditions are individually feasible via control inputs $w_1$ and $w_2$, but there is no $u\in\mathcal{U}$ satisfying both conditions simultaneously, so the intersection of the CBF sets is not a viability domain.}
		\label{fig:example}
		\vspace{-6pt}
	\end{figure}

	\begin{figure}
		\centering
		\begin{tikzpicture}
			\node[anchor=south west,inner sep=0] (image) at (0,0) 	{\includegraphics[width=0.49\columnwidth,clip,trim={0.68in, 0.85in, 0.5in, 0.7in}]{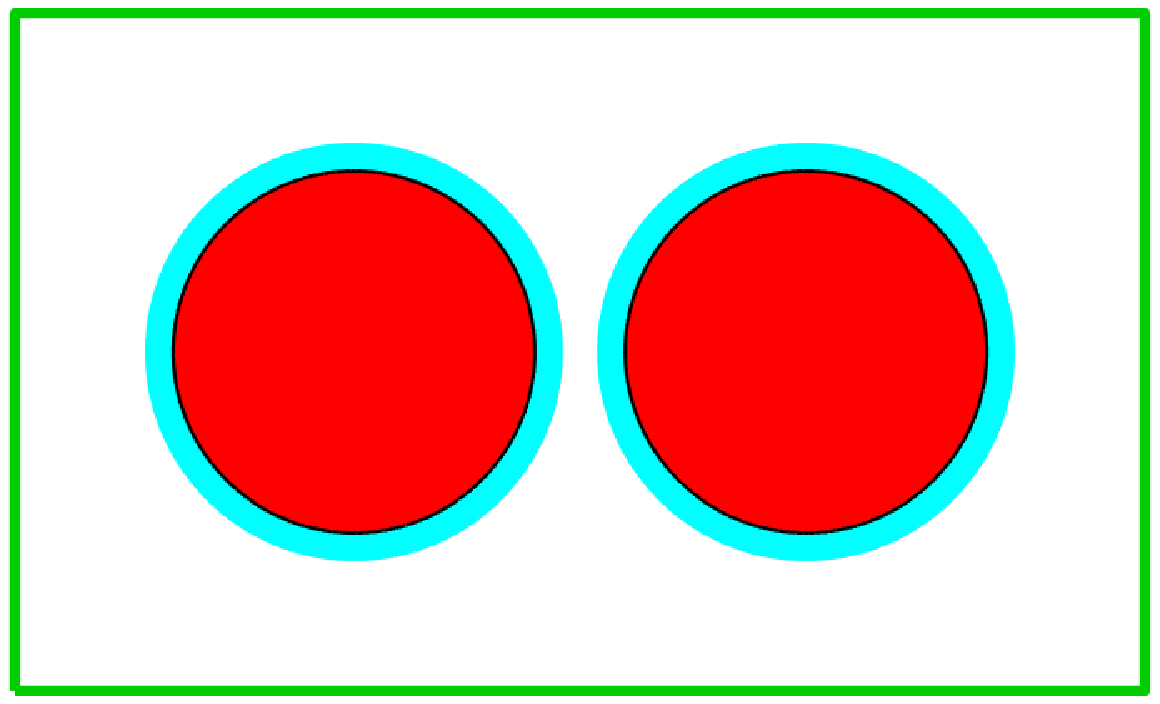}};
			\node[anchor=south east,inner sep=0] (image) at (\columnwidth,0) 	{\includegraphics[width=0.49\columnwidth,clip,trim={0.68in, 0.85in, 0.5in, 0.7in}]{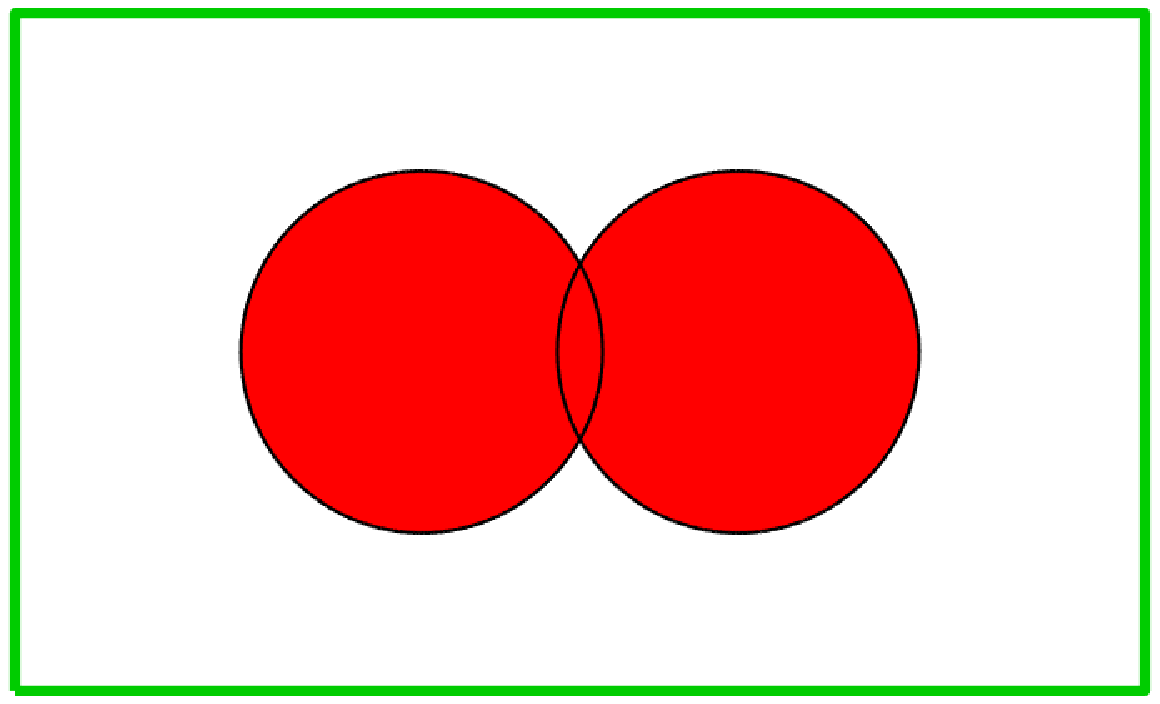}};
			\node [anchor=south west] (note) at (0.7,0.96) {$h_1 > 0$}; 
			\node [anchor=south west] (note) at (2.3,0.96) {$h_2 > 0$}; 
			\node [anchor=south west] (note) at (5.26,0.96) {$h_1 > 0$}; 
			\node [anchor=south west] (note) at (6.58,0.96) {$h_2 > 0$}; 
			\node [anchor=south west] (note) at (0.05,1.95) {\color{Green}$\mathcal{S}_1\cap\mathcal{S}_2$}; 
			\node [anchor=south west] (note) at (4.48,1.95) {\color{Green}$\mathcal{S}_1\cap\mathcal{S}_2$}; 
			\node [anchor=south west] (note) at (0.14,0.14) {\color{cyan}$h_1\geq-\epsilon$};
			\node [anchor=south west] (note) at (2.65,0.14) {\color{cyan}$h_2\geq-\epsilon$}; 
		\end{tikzpicture}
		\caption{Left: In \cite{type_ii_cbfs}, the problem of multi-CBF safety is simplified by assuming that $\{x \mid h_1(x)=0\}\cap\{x\mid h_2(x)=0\} = \emptyset$, and then designing separate safe controllers for the cases $h_1(x) \geq -\epsilon$ and $h_2(x) \geq -\epsilon$. Right: By contrast, this work is interested in the case where $\{x \mid h_1(x)=0\}\cap\{x\mid h_2(x)=0\} \neq \emptyset$, and thus both CBF conditions must be simultaneously satisfied, potentially resulting in conflicts such as that in Fig.~\ref{fig:example}.}
		\label{fig:boundaries}
		\vspace{-10pt}
	\end{figure}
	
	The most common strategy to employ multiple CBFs is to assume that all the CBFs act independently of each other \cite{safe_euler_lagrange,ACC2022,type_ii_cbfs}. For example, the CBFs may apply to different states with decoupled input channels \cite{safe_euler_lagrange,ACC2022}, or it may be the case that only one CBF acts at a time \cite{type_ii_cbfs}. The latter is equivalent to assuming that the boundaries of the individual CBF zero sublevel sets, i.e. the CBF zero level sets, do not intersect, as shown in Fig.~\ref{fig:boundaries}. In this case, the CBFs may be designed in a one-at-a-time fashion using existing algorithms. However, if this does not hold, then the CBFs must be designed all-at-once, or else the intersection of the CBF sets may not be a viability domain.
	
	There exist many tools for the computation of viability domains in the control verification literature 
	\cite{power_systems,zonotopes_sampled_data,viability_definitions,expanding_polytope_ci,parameterized_linear_rci,viability_kernel_recrusive_improved,barrier_domain_of_attraction}. Such tools have also been used to construct CBFs \cite{HJ_CBFs}, and have been augmented via application of CBF concepts \cite{viable_sets_barrier_func_improved}. 
	However, all of these algorithms are computationally expensive, even for linear systems \cite{parameterized_linear_rci,viability_kernel_recrusive_improved}.
	
	Compared to prior works, this paper presents two contributions. First, we present a methodology for decoupling the design of multiple CBFs in the presence of prescribed input bounds. This decoupling allows one to leverage existing single-CBF algorithms \cite{CBF_Tutorial,euler_lagrange_cbfs,Recursive_CBF,Aircraft_backup_cbf,Automatica} while avoiding conflicts such as those in Fig.~\ref{fig:example}. Second, we present an iterative algorithm to find a viability domain parameterized by these decoupled CBFs. That is, instead of using zonotopes, \cite{viability_definitions,zonotopes_sampled_data}, polytopes \cite{expanding_polytope_ci,parameterized_linear_rci}, or other parameterized functions \cite{barrier_domain_of_attraction,viable_sets_barrier_func_improved,viable_sublevel_sets}, this paper expresses viability domains in terms of an intersection of CBF sets. As each CBF forbids state trajectories from crossing the boundary of its own CBF set, our algorithm focuses on verifying Nagumo's condition only at the states where the boundaries of multiple CBF sets intersect.
	One can then compute safe control actions using a quadratic program (QP) \cite{CBF_Tutorial,Choice_of_class_k} subject to the CBF conditions arising from every CBF, and we show that this QP is always feasible. Note that, compared to \cite{power_systems,zonotopes_sampled_data,viability_definitions,expanding_polytope_ci,parameterized_linear_rci,viability_kernel_recrusive_improved,barrier_domain_of_attraction}, the viability domains resulting from this algorithm will generally be more conservative, as our intent is to enable the use of existing (usually conservative) CBF literature rather than to approximate the maximal viability kernel.

	
	

	
	
	
	\section{Preliminaries} \label{sec:preliminaries}
	
	
	
	%
	
	\subsection{Notations}
	 
	Given a set $\mathcal{S}$, let $\partial \mathcal{S}$ denote the boundary of $\mathcal{S}$, and $\interior(\mathcal{S})$ denote the interior of $\mathcal{S}$. Given a function $\psi:\reals^{n_1}\times\reals^{n_2}$, denoted $\psi(q,v)$, let $\nablaq \psi(q,v)$ denote the row vector of derivatives (i.e. the gradient) of $\psi$ with respect to inputs $q\in\reals^{n_1}$, and let $\nablav \psi(q,v)$ denote the row vector of derivatives of $\psi$ with respect to inputs $v\in\reals^{n_2}$. Let $\mathcal{C}^r$ denote the set of functions $r$-times continuously differentiable in all arguments, and $\mathcal{K}$ the set of class-$\mathcal{K}$ functions. Let $a \cdot b$ denote the dot product between vectors $a$ and $b$. {\color{black}Let $a\transpose$ denote the transpose of the vector $a$.} Let $\|\cdot\|_1$, $\|\cdot\|_2$, and $\|\cdot\|_\infty$ denote the 1-norm, 2-norm, and $\infty$-norm respectively. Let $[N]$ denote the set of all integers between 1 and $N$. Given one or more vectors $\{v_1,\cdots,v_N\}$, define $\proj_{\{v_1,\cdots,v_N\}} u = \sum_{i\in[M]} (u \cdot \hat{b}_i) \hat{b}_i$, where $\{\hat{b}_1,\cdots,\hat{b}_M\}$ is an orthonormal basis spanning $\textrm{span}\{v_1,\cdots,v_N\}$.
	
	\subsection{Model}
	
	Let $q \in \reals^{n_1}$ be the coordinates, and $v \in \reals^{n_2}$ the velocities, of a second-order system of the form
	\begin{subequations}
	\begin{align}
		\dot{q} &= g_1(q) v ,\, \\
		\dot{v} &= f(q,v) + g_2(q) u \,, 
	\end{align}\label{eq:model}%
	\end{subequations}
	with state $x \triangleq (q,v) \in\reals^{n_1+n_2}$ and control input $u\in\mathcal{U}\subset\reals^m$. Let $f:\reals^{n_1}\times\reals^{n_2}\rightarrow\reals^{n_2}$, $g_1:\reals^{n_1}\rightarrow\reals^{n_1\times n_2}$, and $g_2:\reals^{n_1}\rightarrow\reals^{n_2\times m}$ belong to $\mathcal{C}^2$.
	Let $\mathcal{U}$ encode the set of allowable control inputs, herein called the \emph{control set}, and assume that $\mathcal{U}$ is compact and convex and contains the zero vector. 
	Assume that {\color{black}$f$, $g_1$, $g_2$, and $u$ are sufficiently regular that} trajectories of \eqref{eq:model} exist and are unique for all times $t\in\mathcal{T}=[t_0,t_f)$, where $t_f$ is possibly $\infty$. Note that the model \eqref{eq:model} includes Euler-Lagrange systems, where $n_1=n_2$, as in \cite{euler_lagrange_cbfs,safe_euler_lagrange,safe_kinematic}, but is also more general.
	
	
	\subsection{Safety Definitions} \label{sec:safety}
	
	A principal requirement of any autonomous system should be to at all times satisfy certain operation constraints. 
	Specifically, suppose we are given several \emph{constraint functions} $\kappa_i:\reals^{n_1}\rightarrow \reals, \kappa_i\in\mathcal{C}^2, i \in[N_1]$ and $\eta_j:\reals^{n_2}\rightarrow\reals, \eta_j\in\mathcal{C}^1, j\in[N_2]$, which each generate a \emph{constraint set}:
	\begin{gather}
		\mathcal{Q}_i \triangleq \{ q \in \reals^n \mid \kappa_i(q) \leq 0 \} \,, \label{eq:q_set} \\
		\mathcal{V}_j \triangleq \{ v \in \reals^n \mid \eta_j(v) \leq 0\} \,. \label{eq:qdot_set}
	\end{gather}
	We call the composition of all the constraint sets the \emph{safe set} $\mathcal{S}$, and say that the system is \emph{safe} at time $t$ if $x(t)\in\mathcal{S}$:
	\begin{equation}
		%
		%
		\mathcal{S} \triangleq \left( \array{@{}c@{}}\displaystyle \bigcap_{i\in[N_1]} \endarray \mathopen{\raisebox{2pt}{$\mathcal{Q}_i$}} \right) \times \left( \array{@{}c@{}}\displaystyle \bigcap_{j\in[N_2]}\endarray \mathopen{\raisebox{2pt}{$\mathcal{V}_j$}} \right) \,. \label{eq:safe_set}
	\end{equation}
	That is, we allow for both position constraints $\kappa_i$ and velocity constraints $\eta_i$, but assume that these constraints are encoded separately. This implies that each $\kappa_i$ is of relative-degree 2 along \eqref{eq:model} and each $\eta_i$ is of relative-degree 1 along \eqref{eq:model}. This separation of $\mathcal{Q}_i$ and $\mathcal{V}_i$ is not necessary, but is greatly simplifying, as will be explained in Remark~\ref{remark:high_degree}.

	We now introduce two notions of CBF.
	
	\begin{definition} \label{def:cbf}
		Let $\mathcal{X}\subseteq\mathcal{S}$ and $\mathcal{Y}\subseteq\mathcal{U}$. A function $h:\reals^{n_1+n_2}\rightarrow\reals,h\in\mathcal{C}^1$ is a \emph{control barrier function (CBF) for $(\mathcal{X},\mathcal{Y})$} if there exists $\alpha\in\mathcal{K}$ such that the set
		\begin{equation}
			\boldsymbol\mu(q,v,\mathcal{Y}) \triangleq \{ u \in \mathcal{Y} \mid\dot{h}(q,v,u) \leq \alpha(-h(q,v)) \} \label{eq:control_set} \hspace{-1pt}
		\end{equation}
		is nonempty for all $(q,v)\in\mathcal{H}\cap\mathcal{X}$, where
		\begin{equation}
			\mathcal{H} \triangleq \{(q,v)\in\reals^{n_1+n_2}\mid h(q,v)\leq 0\} \,. \label{eq:cbf_set}
		\end{equation}
	 	We call $\mathcal{H}$ the \emph{CBF induced set}, or simply \emph{CBF set}.
	\end{definition}

	Note that under the dynamics \eqref{eq:model}, $\dot{h}$ is affine in $u$,
	\begin{equation}
		\dot{h}(q,v,u) = \nablaq h(q,v)g_1(q)v + \nablav h(q,v) g_2(q) u \,, \label{eq:control_affine}
	\end{equation}
	so {\color{black}if $\mathcal{Y}$ is convex, then} the set $\boldsymbol\mu$ in \eqref{eq:control_set} is {\color{black}also} convex.
	
	\begin{definition}\label{def:scbf}
		Let $\mathcal{X}\subseteq\mathcal{S}$ and $\mathcal{Y}\subseteq\mathcal{U}$. A function $h:\reals^{n_1+n_2}\rightarrow\reals,h\in\mathcal{C}^1$ is a \emph{simple control barrier function (SCBF) for $(\mathcal{X},\mathcal{Y})$} if 1) $h$ is a CBF for $(\mathcal{X},\mathcal{Y})$, and 2) the set \vspace{-12pt}
		\begin{multline}
			\hspace{12pt}\boldsymbol\mu^s(q,v,\mathcal{Y}) \triangleq \{ u \in \boldsymbol\mu(q,v,\mathcal{Y}) \mid \\ \exists c \geq 0 : u = {\color{black}-} c[\nablav h(q,v)g_2(q)]{\color{black}\transpose} \} \label{eq:scbf_control_set}
		\end{multline}
		is nonempty for all $(q,v)\in\mathcal{H}\cap\mathcal{X}$. 
	\end{definition}
	
	That is, $h$ is an SCBF if the condition $\dot{h}(q,v,u) \leq \alpha(-h(q,v))$ can always be satisfied for a control input $u$ {\color{black}anti-}parallel to the vector $\nablav h(q,v)g_2(q)$. We introduce the notion of SCBF because it allows for slightly less conservative viability domain computations, as we will show in Section~\ref{sec:methods}. \extendedversion{SCBFs arise naturally in many practical applications.} 
	Note that if $\mathcal{Y} = \{ u \in \reals^m \mid \|u\|_2 \leq u_\textrm{max}\}$, then any CBF for $(\mathcal{X},\mathcal{Y})$ is automatically an SCBF for $(\mathcal{X},\mathcal{Y})$ too. Next, we recall the definition of \emph{viability domain}.
	
	\begin{definition}[{\hspace{-0.5pt}\cite[Eq. 7]{viability_definitions}}]\label{def:viable}
		A set $\mathcal{A}\subseteq\mathcal{S}$ is called a \emph{viability domain} if for every point $x(t_0)\in\mathcal{A}$, there exists a control signal $u(t)\in\mathcal{U},t\in\mathcal{T}$ such that the trajectory $x(\cdot)$ of \eqref{eq:model} satisifes $x(t)\in\mathcal{A}$ for all $t\in\mathcal{T}$.
	\end{definition}
	
	Note that if 1) $h$ is a CBF for $(\mathcal{S},\mathcal{U})$ and 2) $\mathcal{H}\subseteq\mathcal{S}$, then it follows from Definition~\ref{def:cbf} and \cite[Thm.~2]{CBF_Tutorial} that $\mathcal{H}$ is a viability domain. This is useful because generally $\mathcal{S}$ in \eqref{eq:safe_set} is not a viability domain, but we can use CBFs to describe subsets of $\mathcal{S}$ that are viability domains. However, in this paper, we assume that $\mathcal{S}$ is sufficiently complex that it is difficult to find a single CBF $h$ for which $\mathcal{H}\subseteq\mathcal{S}$, thus motivating the following extensions of \cite[Thm.~2]{CBF_Tutorial}.	
	
	\subsection{Safe Quadratic Program Control}
	
	The heart of safety-critical control is Nagumo's theorem:
	\begin{lemma}[{\hspace{0.5pt}\cite[Thm.~3.1]{Blanchini}}] \label{lemma:nagumo}
		Let $\mathcal{A}\subseteq\reals^n$ be a closed convex set, and let $\mathcal{A}^\mathcal{T}(x)$ be the tangent cone of $\mathcal{A}$ at $x\in\reals^n$. Then $\mathcal{A}$ is forward invariant under \eqref{eq:model} if and only if $\dot{x}\in\mathcal{A}^\mathcal{T}(x)$ for all $x\in\mathcal{A}$.
	\end{lemma}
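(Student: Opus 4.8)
The plan is to prove the two implications separately, first observing that the condition is nontrivial only on the boundary $\partial\mathcal{A}$: for any $x\in\interior(\mathcal{A})$ the tangent cone $\mathcal{A}^\mathcal{T}(x)$ equals all of $\reals^n$, so $\dot{x}\in\mathcal{A}^\mathcal{T}(x)$ holds automatically. Thus both directions reduce to analyzing trajectories as they sit on or approach $\partial\mathcal{A}$. Throughout I write $F$ for the right-hand side of \eqref{eq:model}, so that $\dot{x}=F(x)$, and I rely on the fact that the $\mathcal{C}^2$ regularity of $f,g_1,g_2$ together with sufficiently regular $u$ makes $F$ locally Lipschitz.

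For the necessity direction (invariance $\Rightarrow$ tangency), I would argue directly from the definition of the tangent cone as the closure of the set of feasible secant directions. Fixing $x\in\mathcal{A}$ and letting $x(\cdot)$ be the trajectory with $x(t_0)=x$, forward invariance gives $x(t_0+\tau)\in\mathcal{A}$ for all small $\tau>0$. Writing $\dot{x}=\lim_{\tau\to 0^+}\tau^{-1}(x(t_0+\tau)-x)$ exhibits $\dot{x}$ as a limit of directions $z_\tau$ for which $x+\tau z_\tau\in\mathcal{A}$; since $\mathcal{A}^\mathcal{T}(x)$ is closed, the limit lies in $\mathcal{A}^\mathcal{T}(x)$.

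The hard part will be sufficiency (tangency $\Rightarrow$ invariance), and here I would use the distance function $\delta(t)\triangleq\mathrm{dist}(x(t),\mathcal{A})$. Because $\mathcal{A}$ is closed and convex, the metric projection $\pi(x(t))$ is single-valued, $\delta$ is Lipschitz, and where $\delta(t)>0$ the displacement $x(t)-\pi(x(t))$ is an outer normal to $\mathcal{A}$ at $\pi(x(t))$, which yields the upper Dini derivative
\[
D^+\delta(t)=\frac{\big(x(t)-\pi(x(t))\big)\cdot\dot{x}(t)}{\norm{x(t)-\pi(x(t))}}\,.
\]
I would then split $\dot{x}(t)=F(x(t))=F(\pi(x(t)))+\big(F(x(t))-F(\pi(x(t)))\big)$. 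The tangent-cone hypothesis, applied at the point $\pi(x(t))\in\mathcal{A}$, makes the first contribution nonpositive, since $x(t)-\pi(x(t))$ is an outer normal at $\pi(x(t))$ while $F(\pi(x(t)))\in\mathcal{A}^\mathcal{T}(\pi(x(t)))$; the second contribution is bounded by $L\,\delta(t)^2$ via local Lipschitz continuity of $F$. Hence $D^+\delta(t)\le L\,\delta(t)$.

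Finally, since $x(t_0)\in\mathcal{A}$ gives $\delta(t_0)=0$, a comparison (Gr\"onwall-type) argument on the differential inequality $D^+\delta\le L\delta$ forces $\delta(t)\equiv 0$, i.e. $x(t)\in\mathcal{A}$ for all $t\in\mathcal{T}$, establishing invariance. I expect the main obstacle to be precisely this sufficiency direction: justifying the Dini-derivative formula for $\delta$ and reconciling the fact that the tangent-cone condition is hypothesized only at points of $\mathcal{A}$ with a trajectory that might momentarily leave $\mathcal{A}$. Both difficulties are resolved by evaluating the hypothesis at the projection $\pi(x(t))\in\mathcal{A}$ and absorbing the mismatch $F(x(t))-F(\pi(x(t)))$ through Lipschitz continuity, which is what converts the pointwise tangency condition into the global invariance conclusion.
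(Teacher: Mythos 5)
The paper does not prove this lemma at all---it is imported verbatim from \cite[Thm.~3.1]{Blanchini}---so there is no in-paper argument to compare against. Your proof is correct and is essentially the standard argument for Nagumo's theorem on closed convex sets (the one underlying the cited reference): necessity via secant directions and closedness of the tangent cone, and sufficiency via the Dini derivative of $\mathrm{dist}(x(t),\mathcal{A})$, the polarity between $x(t)-\pi(x(t))$ and $\mathcal{A}^\mathcal{T}(\pi(x(t)))$ at the projection point, a local Lipschitz bound on the mismatch, and a Gr\"onwall comparison.
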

	\noindent
	Note that if $x\in\interior(\mathcal{A})$, then $\mathcal{A}^\mathcal{T}(x) = \reals^{n}$, so Lemma~\ref{lemma:nagumo} in effect only depends on the flow $\dot{x}$ when $x \in \partial\mathcal{A}$. We use this fact in Lemma~\ref{lemma:all_invariance} below.
	
	Given multiple constraint functions $\kappa_i, {i\in[N_1]}$ and $\eta_j, {j\in[N_2]}$, it is common \cite{ConstrainedRobots,Control_Sharing,compatibility_checking} to construct multiple CBFs $h_k,{k\in[M]}$. These can then be used for safe control according to the following lemma, which follows immediately from \cite[Thm. 2]{CBF_Tutorial}.
	
	\begin{lemma} \label{lemma:multi_invariance}
		Given functions $\{h_k\}_{k\in[M]}$, {\color{black}with $\mathcal{H}_k$ as in \eqref{eq:cbf_set},} denote $\mathcal{H}_\textrm{all}=\cap_{k\in[M]}\mathcal{H}_k$. Assume that each $h_k$ is a CBF for $(\mathcal{H}_\textrm{all},\mathcal{U}$). 
		Then any control {\color{black}law $u:\reals^{n_1+n_2}\rightarrow\mathcal{U}$} satisfying
		\begin{align}
			%
			%
			u(q,v)&\in\boldsymbol\mu_\textrm{all}(q,v)\triangleq \bigcap_{k\in[M]}\boldsymbol\mu_k(q,v,\mathcal{U}) \label{eq:cbf_condition}
		\end{align}%
		 for all $(q,v)\in\mathcal{H}_\textrm{all}$ will render $\mathcal{H}_\textrm{all}$ forward invariant. 
	\end{lemma}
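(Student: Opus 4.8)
The plan is to reduce the multi-CBF claim to the single-CBF invariance result \cite[Thm.~2]{CBF_Tutorial} applied to each $h_k$ in turn, with the only care needed being that the CBF conditions are guaranteed to hold simultaneously only on the joint set $\mathcal{H}_\textrm{all}$ rather than on each individual set $\mathcal{H}_k$. First I would record two structural facts. Since each $\mathcal{H}_k$ in \eqref{eq:cbf_set} is the zero sublevel set of a continuous function, $\mathcal{H}_\textrm{all}=\cap_{k\in[M]}\mathcal{H}_k$ is closed. Moreover, because $\mathcal{H}_\textrm{all}\subseteq\mathcal{H}_k$ for every $k$, the assumption that each $h_k$ is a CBF for $(\mathcal{H}_\textrm{all},\mathcal{U})$ furnishes a class-$\mathcal{K}$ function $\alpha_k$ such that, by the definition of $\boldsymbol\mu_k$ in \eqref{eq:control_set}, membership $u(q,v)\in\boldsymbol\mu_\textrm{all}(q,v)$ is exactly the statement that $\dot{h}_k(q,v,u(q,v))\leq\alpha_k(-h_k(q,v))$ holds for \emph{all} $k\in[M]$ at \emph{every} point $(q,v)\in\mathcal{H}_\textrm{all}$.

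Next I would fix an initial condition $x(t_0)\in\mathcal{H}_\textrm{all}$, let $x(\cdot)$ be the resulting unique trajectory of \eqref{eq:model} under the control law $u$, and let $[t_0,\tau)$ denote the maximal subinterval of $\mathcal{T}$ on which $x(t)\in\mathcal{H}_\textrm{all}$. On this interval all $M$ differential inequalities above are in force, so for each fixed $k$ the scalar signal $y_k(t)\triangleq h_k(x(t))$, which is absolutely continuous because $h_k\in\mathcal{C}^1$, satisfies $\dot{y}_k(t)\leq\alpha_k(-y_k(t))$ with $y_k(t_0)\leq 0$. This is precisely the hypothesis of the comparison argument underlying \cite[Thm.~2]{CBF_Tutorial}: since $\alpha_k(0)=0$ makes $0$ an equilibrium of the comparison system $\dot{z}=\alpha_k(-z)$, the comparison lemma yields $y_k(t)\leq 0$, i.e. $h_k(x(t))\leq 0$, for all $t\in[t_0,\tau)$ and all $k\in[M]$.

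Finally I would close the argument by continuity to rule out $\tau<t_f$. Suppose $\tau<t_f$; then $x(\tau)=\lim_{t\to\tau^-}x(t)$ exists, and by continuity of each $h_k$ together with the bound just established, $h_k(x(\tau))\leq 0$ for every $k$, so $x(\tau)\in\mathcal{H}_\textrm{all}$, where closedness of $\mathcal{H}_\textrm{all}$ is used. But then the control law again satisfies every CBF condition at $x(\tau)$, so the trajectory can be continued inside $\mathcal{H}_\textrm{all}$ past $\tau$, contradicting maximality. Hence $\tau=t_f$ and $x(t)\in\mathcal{H}_\textrm{all}$ for all $t\in\mathcal{T}$, which is the claimed forward invariance.

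I expect the only genuine subtlety, and the reason the statement is not completely immediate, to be the bookkeeping in the last two steps: the per-$k$ comparison bound is valid only while $x(t)$ remains in $\mathcal{H}_\textrm{all}$, so one cannot invoke \cite[Thm.~2]{CBF_Tutorial} on the individual sets $\mathcal{H}_k$ in isolation. Restricting to the maximal interval and reopening it by continuity is exactly what lets the simultaneously satisfied conditions on $\mathcal{H}_\textrm{all}$ do their job, while everything else is the standard single-CBF computation.
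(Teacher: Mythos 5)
Your overall strategy---reduce to a per-$k$ comparison argument showing $h_k(x(t))\leq 0$ for each $k$ separately---is exactly the route behind the paper's treatment: the paper offers no proof of this lemma beyond the remark that it ``follows immediately from \cite[Thm.~2]{CBF_Tutorial}'', and that theorem is itself established by the comparison argument you invoke. Working with each $h_k$ individually is also the right instinct here, since it delivers $x(t)\in\mathcal{H}_k$ for every $k$ directly and so sidesteps any question about whether the tangent cone of the intersection $\mathcal{H}_\textrm{all}$ coincides with the intersection of the individual tangent cones.

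The one step that does not hold up is the closing continuation argument. Having shown $x(\tau)\in\mathcal{H}_\textrm{all}$, you assert that because the CBF conditions are satisfied at $x(\tau)$, ``the trajectory can be continued inside $\mathcal{H}_\textrm{all}$ past $\tau$.'' That is precisely the statement being proved, restarted at $\tau$: satisfaction of the pointwise inequalities at a single boundary point does not by itself produce an $\epsilon>0$ with $x(t)\in\mathcal{H}_\textrm{all}$ on $[\tau,\tau+\epsilon)$, and your comparison bound is only available while the trajectory is already known to lie in $\mathcal{H}_\textrm{all}$, so the argument is circular at exactly this point. The missing ingredient is the nontrivial direction of Nagumo's theorem (the paper's Lemma~\ref{lemma:nagumo}), which is what converts the boundary condition $\dot{h}_k\leq\alpha_k(0)=0$ at $x(\tau)$ into local persistence in the set; alternatively, in the setting of \cite[Thm.~2]{CBF_Tutorial} the CBF inequality is imposed on an open set containing the invariant set, so the differential inequality $\dot{y}_k\leq\alpha_k(-y_k)$ remains in force for a short time past $\tau$ and the comparison lemma itself, applied on $[\tau,\tau+\epsilon)$ with $y_k(\tau)\leq 0$, rules out escape. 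Either repair completes your proof; without one of them the ``reopen by continuity'' step is an assertion rather than an argument.
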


	Note that Lemma~\ref{lemma:multi_invariance} does not include the safe set $\mathcal{S}$ from \eqref{eq:safe_set}, so safety is only guaranteed if $\mathcal{H}_\textrm{all}\subseteq\mathcal{S}$. We next introduce a modified version of Lemma~\ref{lemma:multi_invariance} to handle the possibility that $\mathcal{H}_\textrm{all} \not\subseteq\mathcal{S}$, as frequently occurs when using High Order CBFs (\regularversion{\color{black}HOCBFs, }e.g. \cite[Thm.~5]{Recursive_CBF},\cite[Lemma~7]{Automatica}).
	
	\begin{lemma} \label{lemma:all_invariance}
		Given functions $\{h_k\}_{k\in[M]}$, {\color{black}with $\mathcal{H}_k$ as in \eqref{eq:cbf_set},} denote $\mathcal{H}_\textrm{all}=\cap_{k\in[M]}\mathcal{H}_k$ and $\mathcal{A} = \mathcal{H}_\textrm{all}\cap\mathcal{S}$. Assume that each $h_k$ is a CBF for $(\mathcal{A},\mathcal{U})$. {\color{black}Let $u:\reals^{n_1+n_2}\rightarrow\mathcal{U}$ be a control law.} For {\color{black} every $i\in[N_1]$, $j\in[N_2]$, and} all $(q,v) \in \interior(\mathcal{H}_\textrm{all})$, assume that $\kappa_i(q) = 0 \implies \dot{\kappa}_i(q,v)\leq 0$ and that $\eta_j(v)=0\implies\dot{\eta}_j(q,v,u{\color{black}(q,v)})\leq 0$. 
		{\color{black}If $u$ satisfies \eqref{eq:cbf_condition} for all $(q,v)\in\mathcal{A}$, then $u$ will render $\mathcal{A}$ forward invariant.} 
	\end{lemma}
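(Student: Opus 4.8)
The plan is to apply Nagumo's condition (Lemma~\ref{lemma:nagumo}) to the closed set $\mathcal{A}$, but, exploiting the remark that follows that lemma, to verify the tangent-cone membership $\dot{x}\in\mathcal{A}^\mathcal{T}(x)$ only on $\partial\mathcal{A}$, since at interior points $\mathcal{A}^\mathcal{T}(x)=\reals^{n_1+n_2}$ and the condition is automatic. Because $\mathcal{A}$ is the intersection of the sublevel sets $\{h_k\le0\}_{k\in[M]}$, $\{\kappa_i\le0\}_{i\in[N_1]}$, and $\{\eta_j\le0\}_{j\in[N_2]}$, each $x=(q,v)\in\partial\mathcal{A}$ has a nonempty set of active constraints, and $\dot{x}$ lies in $\mathcal{A}^\mathcal{T}(x)$ once the flow derivative of every active constraint function is nonpositive at $x$. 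First I would therefore reduce the statement to proving, at each $x\in\partial\mathcal{A}$: (i) $\dot h_k\le0$ whenever $h_k(q,v)=0$; (ii) $\dot\kappa_i(q,v)\le0$ whenever $\kappa_i(q)=0$; and (iii) $\dot\eta_j(q,v,u(q,v))\le0$ whenever $\eta_j(v)=0$.

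Claim (i) is direct. Since $u$ satisfies \eqref{eq:cbf_condition}, we have $u(q,v)\in\boldsymbol\mu_k(q,v,\mathcal{U})$, and hence $\dot h_k(q,v,u(q,v))\le\alpha_k(-h_k(q,v))$ by \eqref{eq:control_set} for the class-$\mathcal{K}$ function $\alpha_k$ associated with $h_k$; at an active constraint $h_k(q,v)=0$, and since $\alpha_k(0)=0$, this yields $\dot h_k\le0$. For (ii) and (iii) I would partition $\partial\mathcal{A}$ according to whether the point lies in $\interior(\mathcal{H}_\textrm{all})$ or in $\partial\mathcal{H}_\textrm{all}$. On the portion inside $\interior(\mathcal{H}_\textrm{all})$ no $h_k$ is active, so only (ii)--(iii) are needed, and the two standing hypotheses of the lemma supply them verbatim.

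The hard part will be the remaining ``corner'' states in $\partial\mathcal{H}_\textrm{all}\cap\partial\mathcal{S}$, where an original constraint $\kappa_i$ or $\eta_j$ is active simultaneously with some CBF $h_k$; there the hypotheses, posed only on $\interior(\mathcal{H}_\textrm{all})$, do not apply directly. My plan is to recover (ii)--(iii) at these states by continuity. For $\kappa_i$ this is clean: $\dot\kappa_i(q,v)=\nablaq\kappa_i(q)g_1(q)v$ is continuous and, because $\kappa_i$ has relative degree two, does not depend on $u$; being nonpositive on $\{\kappa_i=0\}\cap\interior(\mathcal{H}_\textrm{all})$, it stays nonpositive on the closure of that set, which I claim contains the corner states of $\mathcal{A}$. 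For $\eta_j$ the same limiting argument is applied to $\dot\eta_j(q,v,u(q,v))=\nablav\eta_j(v)\big(f(q,v)+g_2(q)u(q,v)\big)$.

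I expect this limiting step to be the crux, for two reasons. First, passing $\dot\eta_j$ to the closure uses continuity of the composite $\dot\eta_j(\cdot,\cdot,u(\cdot))$ across $\partial\mathcal{H}_\textrm{all}$, which needs either continuity of the control law $u$ or the stronger reading that the $\eta_j$ hypothesis already holds on $\overline{\interior(\mathcal{H}_\textrm{all})}$. Second, both extensions implicitly require each active surface $\{\kappa_i=0\}$ or $\{\eta_j=0\}$ to be approachable through $\interior(\mathcal{H}_\textrm{all})$, i.e. a mild regularity/constraint-qualification condition on how the constraint boundaries meet the CBF boundaries, so that the tangent cone of $\mathcal{A}$ equals the intersection of the individual linearized half-spaces. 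Once (i)--(iii) are established at every $x\in\partial\mathcal{A}$, every active constraint has nonpositive flow derivative, so $\dot{x}\in\mathcal{A}^\mathcal{T}(x)$ on all of $\partial\mathcal{A}$, and Lemma~\ref{lemma:nagumo} then yields forward invariance of $\mathcal{A}$.
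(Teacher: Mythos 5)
The paper never writes out a proof of this lemma --- it is stated bare, with only the follow-up sentence observing that the $\kappa_i$ hypothesis is Nagumo's necessary condition --- so there is no official argument to compare yours against. Your reconstruction is exactly the intended route: invoke Lemma~\ref{lemma:nagumo} on $\mathcal{A}$, note the tangent cone is all of $\reals^{n_1+n_2}$ in $\interior(\mathcal{A})$, and on $\partial\mathcal{A}$ check that every active constraint among $\{h_k\}$, $\{\kappa_i\}$, $\{\eta_j\}$ has nonpositive flow derivative. Your step (i) is right ($u(q,v)\in\boldsymbol\mu_k$ and $\alpha_k(0)=0$ give $\dot h_k\leq 0$ on $\{h_k=0\}$), and the two difficulties you single out are genuine features of the statement, not artifacts of your proof: the hypotheses on $\kappa_i,\eta_j$ are posed only on $\interior(\mathcal{H}_\textrm{all})$ and so do not literally cover the corner states in $\partial\mathcal{H}_\textrm{all}\cap\partial\mathcal{S}$, and equating $\mathcal{A}^\mathcal{T}(x)$ with the intersection of the individual linearized cones needs a constraint qualification that the paper leaves implicit (as does most of the CBF literature). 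Two remarks on how the authors evidently intend these to be discharged, which would let you avoid the continuity/closure step you call the crux. For the relative-degree-2 constraints, the CBFs are of the HOCBF form of Remark~\ref{remark:high_degree}, $h_k(q,v)=\dot\kappa_i(q,v)-\phi(-\kappa_i(q))$ with $\phi\in\mathcal{K}$; then $h_k(q,v)\leq 0$ and $\kappa_i(q)=0$ together give $\dot\kappa_i(q,v)\leq\phi(0)=0$ on \emph{all} of $\mathcal{H}_k\supseteq\mathcal{H}_\textrm{all}$, boundary included, so (ii) holds at the corners with no limiting argument and no continuity of $u$. For the relative-degree-1 constraints, the paper takes $h_k=\eta_j$ (see the remark after Assumption~\ref{assumption} and Section~\ref{sec:simulations}), so at $\eta_j=0$ the active-CBF condition from \eqref{eq:control_set} itself delivers $\dot\eta_j(q,v,u(q,v))\leq 0$. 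If you want the lemma at the stated level of generality, you should either add these structural hypotheses or the approachability/regularity condition you identified; as written, your closure argument is the honest patch, and your observation that it needs $\{\kappa_i=0\}\cap\interior(\mathcal{H}_\textrm{all})$ to be dense in $\{\kappa_i=0\}\cap\mathcal{H}_\textrm{all}$ (and continuity of $u$ for the $\eta_j$ case) is a correct diagnosis of what the lemma leaves unsaid.
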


	Note that  $\kappa_i(q) = 0 \implies \dot{\kappa}_i(q,v)\leq 0$ is equivalent to Nagumo's necessary condition in Lemma~\ref{lemma:nagumo}. Thus, Lemma~\ref{lemma:all_invariance} highlights how one purpose of the CBFs $h_k$ is to construct a set $\mathcal{A}\subseteq\mathcal{S}$ that excludes all the states in $\mathcal{S}$ where Nagumo's necessary condition does not hold for any $u\in\mathcal{U}$. \extendedversion{This idea is the central motivation for the algorithm in Section~\ref{sec:three_constraints}.}
	
	Finally, given a collection of constraints $\kappa_i,\eta_j$ and CBFs $h_k$ satisfying the conditions of Lemma~\ref{lemma:all_invariance}, it is common to \regularversion{\color{black}use}\extendedversion{construct} {\color{black}control laws $u:\reals^{n_1+n_2}\rightarrow\mathcal{U}$} of the form \cite[Sec.~II-C]{CBF_Tutorial}%
	\begin{subequations}
	\begin{align}
		&u(q,v) = \argmin_{u\in\mathcal{U}} \|u - u_\textrm{nom}(q,v)\|_2^2 \label{eq:first_qp_norm} \\
		&\;\;\;\;\;\;\;\;\;\; \textrm{ s.t. } \dot{h}_k(q,v,u) \leq \alpha_k(-h_k(q,v)), \forall k\in[M] \label{eq:first_qp_conditions}
	\end{align}\label{eq:first_qp}%
	\end{subequations}
	where $\alpha_k$ comes from \eqref{eq:control_set}, $u_\textrm{nom}$ is any control law, and \eqref{eq:first_qp_conditions} is affine due to \eqref{eq:control_affine}. 
	If $u$ in \eqref{eq:first_qp} always exists, i.e. if $\boldsymbol\mu_\textrm{all}$ in \eqref{eq:cbf_condition} is nonempty for all $(q,v)\in\mathcal{A}$, then it follows from Lemma~\ref{lemma:all_invariance} that the set $\mathcal{A}$ in Lemma~\ref{lemma:all_invariance} is a viability domain. 
	However, if there exists $(q,v)\in\mathcal{A}$ for which $\boldsymbol\mu_\textrm{all}(q,v)$ is empty, then trajectories originating at $(q,v)$ might exit the safe set $\mathcal{S}$. Thus, the goal of this paper is to present tools to ensure that $\mathcal{A}$ is a viability domain, so that the related controller \eqref{eq:the_qp} (to be introduced) is always feasible.
	
	
	\regularversion{\vspace{-2pt}}
	\begin{problem} \label{problem}
		Determine a set of CBFs $\{h_k\}_{k\in[M]}$ such that the set $\mathcal{A}$ in Lemma~3 is a viability domain.
	\end{problem}
	
	\subsection{Assumptions and Motivating Example}
	
	The principal challenge addressed in this paper is the problem of multi-CBF compositions. For this reason, we assume that the single-CBF problem is sufficiently solved.
	

	\regularversion{\vspace{-2pt}}
	\begin{assumption}\label{assumption}
		Given sets $\mathcal{X}\subseteq\mathcal{S}$ and $\mathcal{Y}\subseteq\mathcal{U}$, focus on any single constraint function $\kappa_i$ (or $\eta_j$). Denote $\mathcal{O} = \mathcal{Q}_i\times\reals^{n_2}$ (or $\mathcal{O} = \reals^{n_1}\times\mathcal{V}_j$). Consider the set $\mathcal{Z} = \mathcal{X}\cap(\interior(\mathcal{X})\cup\partial\mathcal{O})$. Assume that there exists an algorithm (e.g. \cite{CBF_Tutorial,euler_lagrange_cbfs,Recursive_CBF,Aircraft_backup_cbf,Automatica}) to derive one or more functions $\{h_k\}_{k=k_1}^{k_2}$, each a CBF for $(\mathcal{X},\mathcal{Y})$, such that $\kappa_i(q)=0\implies \dot{\kappa}_i(q,v) \leq 0$ (or $\eta_j(v)=0\implies \dot{\eta}_j(q,v,u) \leq 0$) for all $(q,v)\in(\mathcal{Z}\cap(\cap_{k=k_1}^{k_2}{\color{black}\interior}(\mathcal{H}_k)))$ {\color{black}and all $u\in\mathcal{U}$, where $\mathcal{H}_k$ is as in \eqref{eq:cbf_set}}. That is, for each constraint function, assume that we can find one or more CBFs that prevents state trajectories from violating that particular constraint function.
		
	\end{assumption}

	\extendedversion{
	\begin{remark}
		In practice, for the relative-degree 1 constraint functions $\eta_j$, we can often choose a CBF $h_k$ so that $h_k = \eta_j$. In this case, the set $\mathcal{Z}\cap \interior(\mathcal{H}_k)$ in Assumption~\ref{assumption} has empty intersection with the set $\{(q,v)\in\mathcal{X} \mid \eta_j(v) = 0\}$, so the condition ``$\eta_j(v) = 0 \implies \dot{\eta}(q,v,u) \leq 0$ for all $(q,v)\in(\mathcal{Z}\cap\interior(\mathcal{H}_k)), u\in\mathcal{U}$'' is automatically satisfied. Therefore, the ``all $u\in\mathcal{U}$'' part of Assumption~\ref{assumption} is a rarely used technicality. One only needs to check this technicality if one chooses a CBF $h_k$ such that there exists $(q,v)\in\mathcal{H}_k$ where $\eta_j(v) > 0$. The authors are unaware of a practical example where this occurs for a relative degree 1 constraint function $\eta_j$, though such a choice of $h_k$ is common for relative-degree 2 constraint functions $\kappa_i$, e.g. \cite{Automatica}.
	\end{remark}
	}

	Successive application of Assumption~\ref{assumption} to every constraint function one-at-a-time then produces a collection of CBFs $\{h_k\}_{k\in[M]}$ satisfying the assumptions of Lemma~\ref{lemma:all_invariance}. However, this still does not imply joint feasibility of all the CBFs $h_k$, as we illustrate with the following example.

	\regularversion{\vspace{-2pt}}
	\begin{example} \label{example}
		Consider the 2D double integrator $\dot{q}=v, \dot{v} = u$, $q = (q_1,q_2)\in\reals^2,v=(v_1,v_2)\in\reals^2$, $u =(u_1,u_2)\in \mathcal{U} = [-1, 1]\times [-1,1]$ subject to two constraint functions $\kappa_1(q) = q_1 + \gamma q_2$ and $\kappa_2(q) = q_1 - \gamma q_2$ for some constant $\gamma > 0$, resulting in the safe set $\mathcal{S}=(\mathcal{Q}_1\cap\mathcal{Q}_2)\times\reals^2$, pictured in Fig.~\ref{fig:example}. From \cite{Automatica,Recursive_CBF}, one can derive CBFs $h_1,h_2$ for $(\mathcal{S},\mathcal{U})$, where $h_i(q,v) = \dot{\kappa}_i(q,v) - \sqrt{-2(1+\gamma)\kappa_i(q)}$, that satisfy the conditions of Lemma~\ref{lemma:all_invariance}. Denote $\mathcal{A}=\mathcal{H}_1\cap\mathcal{H}_2\cap\mathcal{S}$ and let $x_0 = (q,v) = (-\frac{1}{2(1+\gamma)},0,1,0) \in \partial\mathcal{A}$. Then there is no $u \in \mathcal{U}$ that renders $\mathcal{A}$ forward invariant from $x_0$ (see the right side of Fig.~\ref{fig:example}). That is, Nagumo's necessary condition (Lemma~\ref{lemma:nagumo}) for forward invariance of $\mathcal{A}$ is violated at $x_0$.
	\end{example}
	

%
	
	\section{Methodology}\label{sec:methods}
	
%
	
	It is clear from Example~\ref{example} that possessing a collection of CBFs for $(\mathcal{S},\mathcal{U})$ is not sufficient to solve Problem~\ref{problem}. Thus, our first strategy is to identify other control sets $\mathcal{Y}$, for which possessing CBFs for $(\mathcal{S},\mathcal{Y})$ is sufficient to solve Problem~\ref{problem}. That is, if we restrict $w_1,w_2$ in Fig.~\ref{fig:example} to a smaller set $\mathcal{Y}\subset\mathcal{U}$ when designing our CBFs, then under certain conditions, presented in Section~\ref{sec:two_constraints}, we can ensure that several CBFs will be concurrently feasible\extendedversion{{} over the full control set $\mathcal{U}$}. 
	When this strategy fails to yield a full solution to Problem~\ref{problem}, we then present a more typical iterative algorithm in Section~\ref{sec:three_constraints} to remove the remaining infeasible states in $\mathcal{S}$. \extendedversion{We also present a brief remark on QP controllers in Section~\ref{sec:qps}.}
	
	
	\subsection{When All CBFs are Non-Interfering} \label{sec:two_constraints}
	
	Some properties we will need are as follows:
	\begin{definition} \label{def:noninterfere}
		Two CBFs $h_i$ and $h_j$ are called \emph{non-interfering on $\mathcal{X}$} if $(\nablav h_i(q,v)g_2(q)) \cdot (\nablav h_j(q,v) g_2(q)) \geq 0$ for all $(q,v)\in\mathcal{X}$. A collection of CBFs $\{h_k\}_{k\in[M]}$ is called non-interfering if every pair of CBFs is non-interfering.
	\end{definition}

	\begin{definition}\label{def:oep}
		%
		%
		Given a set $\mathcal{U}'\subset\mathcal{U}$, let $\{w_i\}_{i\in[m]}$ be a set of $m$ vectors $w_i\in\mathcal{U}'$ satisfying $w_i\cdot w_j \geq 0, \forall {i\in[m]},{j\in[m]}$. Let $\{y_i\}_{i\in[m]}$ be the set {\color{black}of orthogonal projections} $y_i = w_i - \proj_{(\{w_j\}_{j\in[i-1]})} w_i$ (or ${y_i = w_i}$ if $\{w_i\}_{i\in[m]}$ are orthogonal). The set $\mathcal{U}'$ has the \emph{orthogonal extension property (OEP) with respect to $\mathcal{U}$} if for every such set $\{w_i\}_{i\in[m]}$, the 
		point $z = \sum_{i\in[m]} y_i$ belongs to $\mathcal{U}$.
		%
	\end{definition}
	

	\begin{definition}\label{def:qep}
		Given a set $\mathcal{U}'\subset\mathcal{U}$, let $\{\mathcal{P}_i\}_{i\in[m]}$ be a set of $m$ orthogonal hyperplanes in $\reals^m$ satisfying $\mathcal{P}_i\cap\mathcal{U}' \neq \emptyset, \forall i\in[m]$. Let $p$ be the point where all $m$ hyperplanes intersect (where $p$ is guaranteed to exist because $\{\mathcal{P}_i\}_{i\in[m]}$ are orthogonal). The set $\mathcal{U}'$ has the \emph{quadrant extension property (QEP) with respect to $\mathcal{U}$} if for every such set $\{\mathcal{P}_i\}_{i\in[m]}$, the point $p$ belongs to $\mathcal{U}$.
	\end{definition}
	
	Examples of sets satisfying Definitions~\ref{def:oep}-\ref{def:qep} are as follows:
	
	\begin{example} \label{ex:oep}
		Given various prescribed input bounds $\mathcal{U}$, the following sets $\mathcal{U}'$ possess the OEP or QEP with respect to $\mathcal{U}$. Note that these choices of $\mathcal{U}'$ are not unique.
		\begin{enumerate}
			\item If $\mathcal{U} = \{ u \in \reals^m \mid \|u\|_\infty \leq \gamma\}$, then one possible set with the OEP is $\mathcal{U}' = \{u\in \reals^m \mid \|u\|_1 \leq \gamma\}$ (Fig.~\ref{fig:oep}a). One possible set with the QEP is $\mathcal{U}' = \{u\in \reals^m \mid \|u\|_1 \leq \gamma^*\}$ (Fig.~\ref{fig:oep}b), where $\gamma^*$ must be computed. If $m=2$, then $\gamma^* = \frac{2\gamma}{1+\sqrt{2}}$.
			\item If $\mathcal{U} = \{u \in \reals^m \mid \|u\|_1 \leq \gamma\}$, then one possible set with the OEP is $\mathcal{U}' = \{u\in\reals^m \mid \|u\|_\infty \leq \frac{\gamma}{m}\}$. One possible set with the QEP is $\mathcal{U}' = \{u\in\reals^m \mid \|u\|_\infty \leq \frac{\gamma^*}{m}\}$, where $\gamma^*$ is as in the prior case.
			\item If $\mathcal{U} = \{u \in \reals \mid \|u\|_2 \leq \gamma\}$, then one possible set with the OEP (Fig.~\ref{fig:oep}c) and QEP (Fig.~\ref{fig:oep}d) is $\mathcal{U}' = \{u\in\reals^m \mid \|u\|_2 \leq \frac{\gamma}{\sqrt{m}}\}$.
			\item If $\mathcal{U} = \{ u \in \reals^m \mid \max_i |a_i u_i| \leq \gamma\}$ for constants $\{a_1, \cdots, a_m\}$, then one possible set with the OEP is $\mathcal{U}' = \{u\in \reals^m \mid \sum_i |a_i u_i| \leq \gamma^*\}$ (Fig.~\ref{fig:oep}e), where $\gamma^* \leq \gamma$ must be computed. A set with the QEP can be constructed similarly (Fig.~\ref{fig:oep}f).
		\end{enumerate}  
	\end{example}

	\begin{figure}
		\centering
		\begin{tikzpicture}
			\node[anchor=north,inner sep=0] (image) at (0,0) 	{\includegraphics[width=0.4\columnwidth,trim={0.75in, 0.045in, 1in, 0.3in},clip]{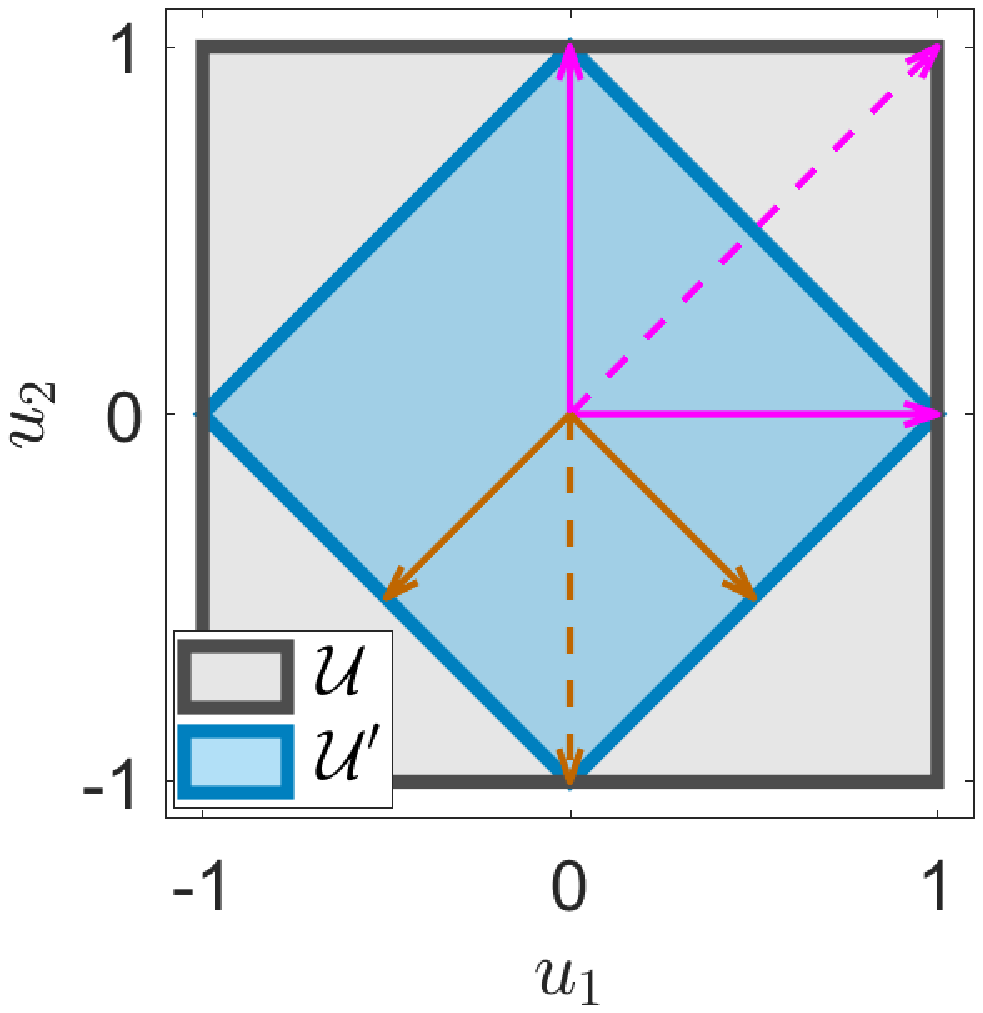}};
			\node[anchor=north,inner sep=0] (image) at (0.42\columnwidth,0) {\includegraphics[width=0.4\columnwidth,trim={0.75in, 0.045in, 1in, 0.3in},clip]{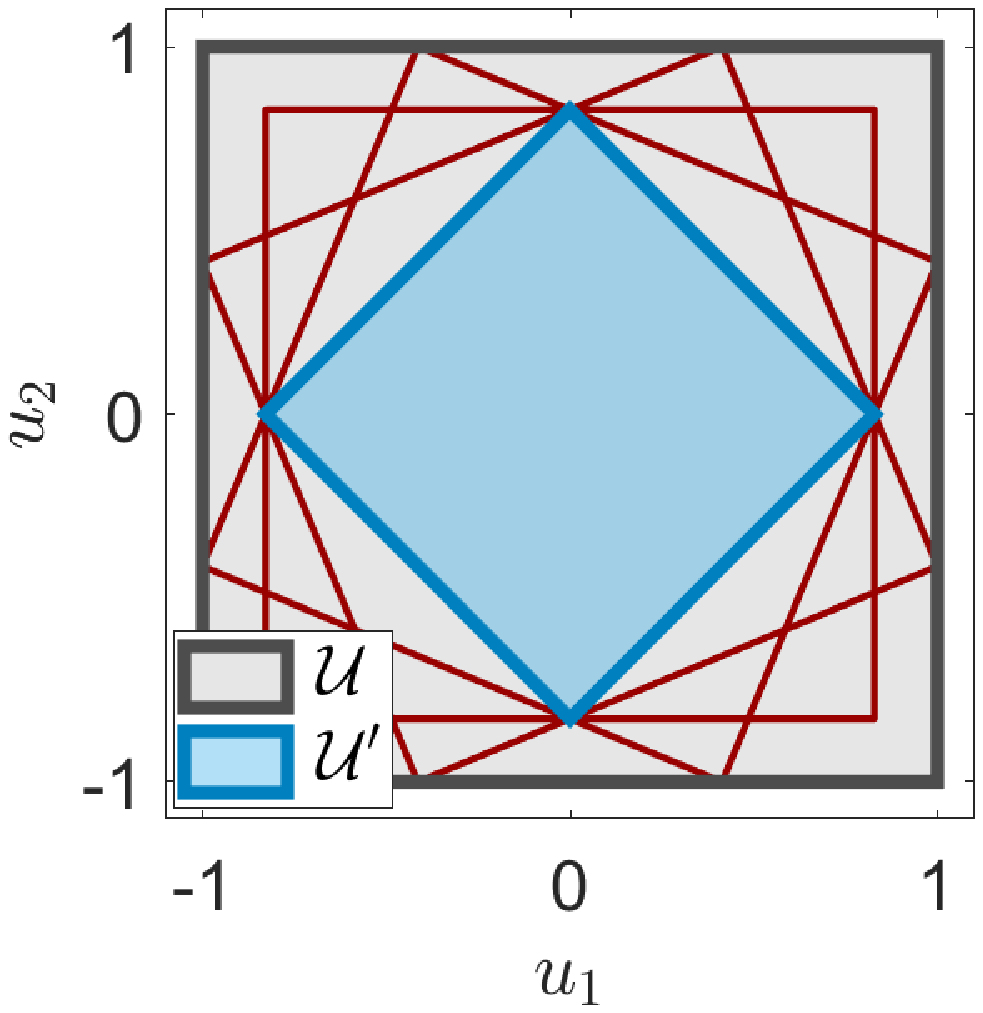}};
			\node[anchor=north,inner sep=0] (image) at (0,-1.4in) {\includegraphics[width=0.4\columnwidth,trim={0.75in, 0.045in, 1in, 0.3in},clip]{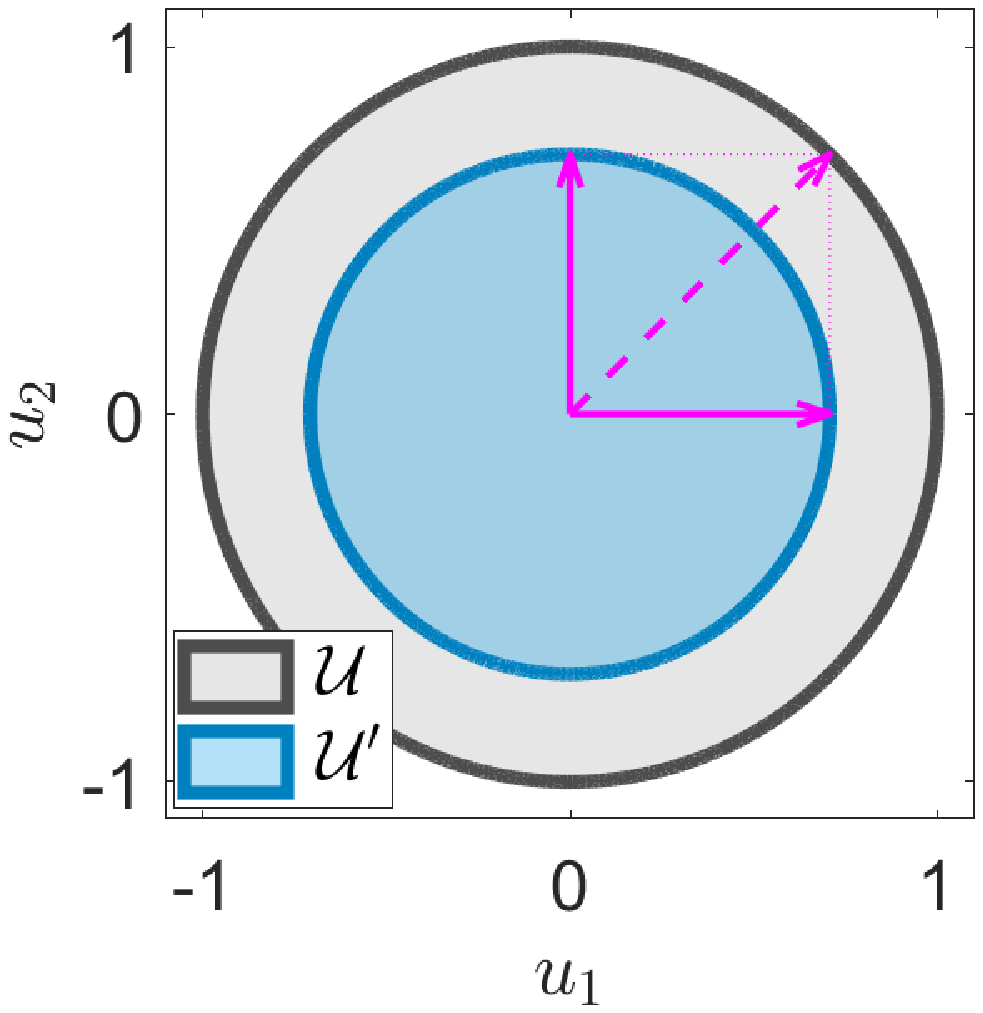}};
			\node[anchor=north,inner sep=0] (image) at (0.42\columnwidth,-1.4in) {\includegraphics[width=0.4\columnwidth,trim={0.65in, 0.045in, 1in, 0.3in},clip]{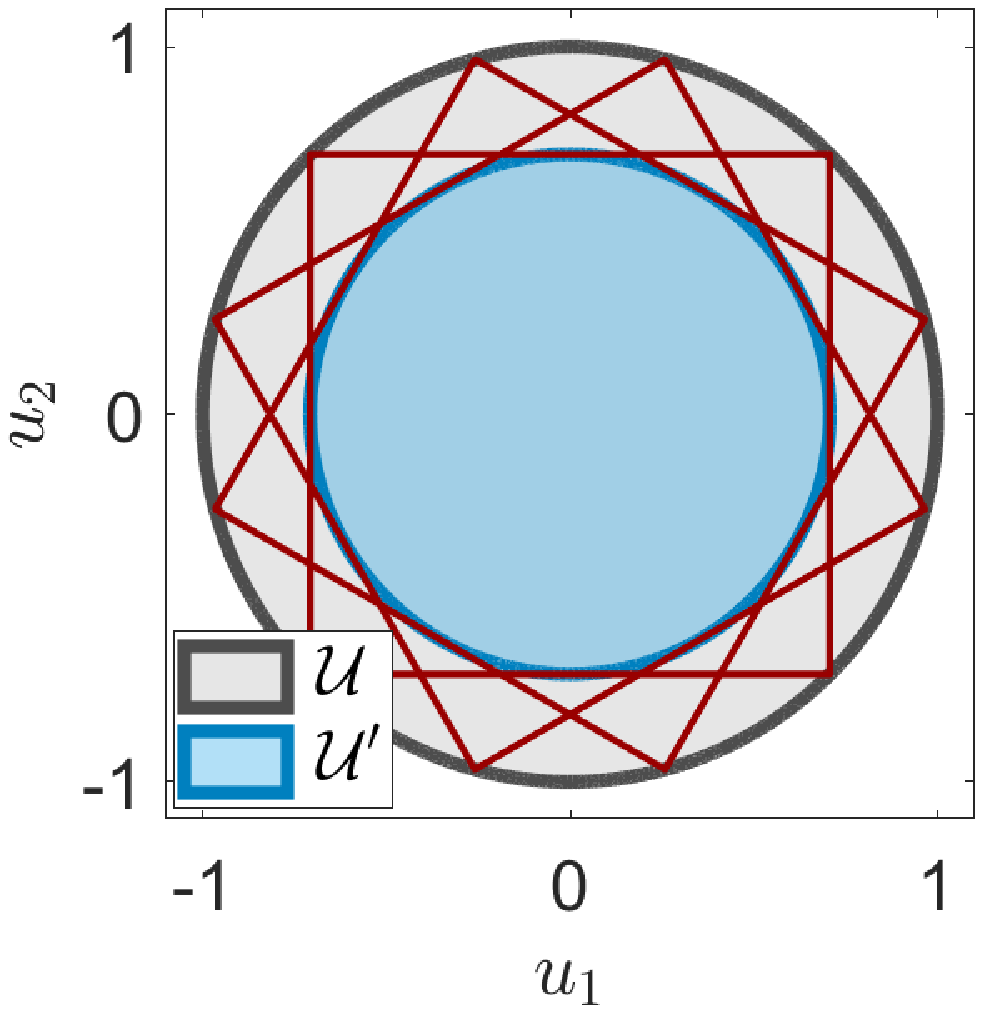}};
			\node[anchor=north,inner sep=0] (image) at (-0.06\columnwidth,-2.8in) {\includegraphics[width=0.49\columnwidth,trim={0.1in, 0.3in, 0.5in, 0.95in},clip]{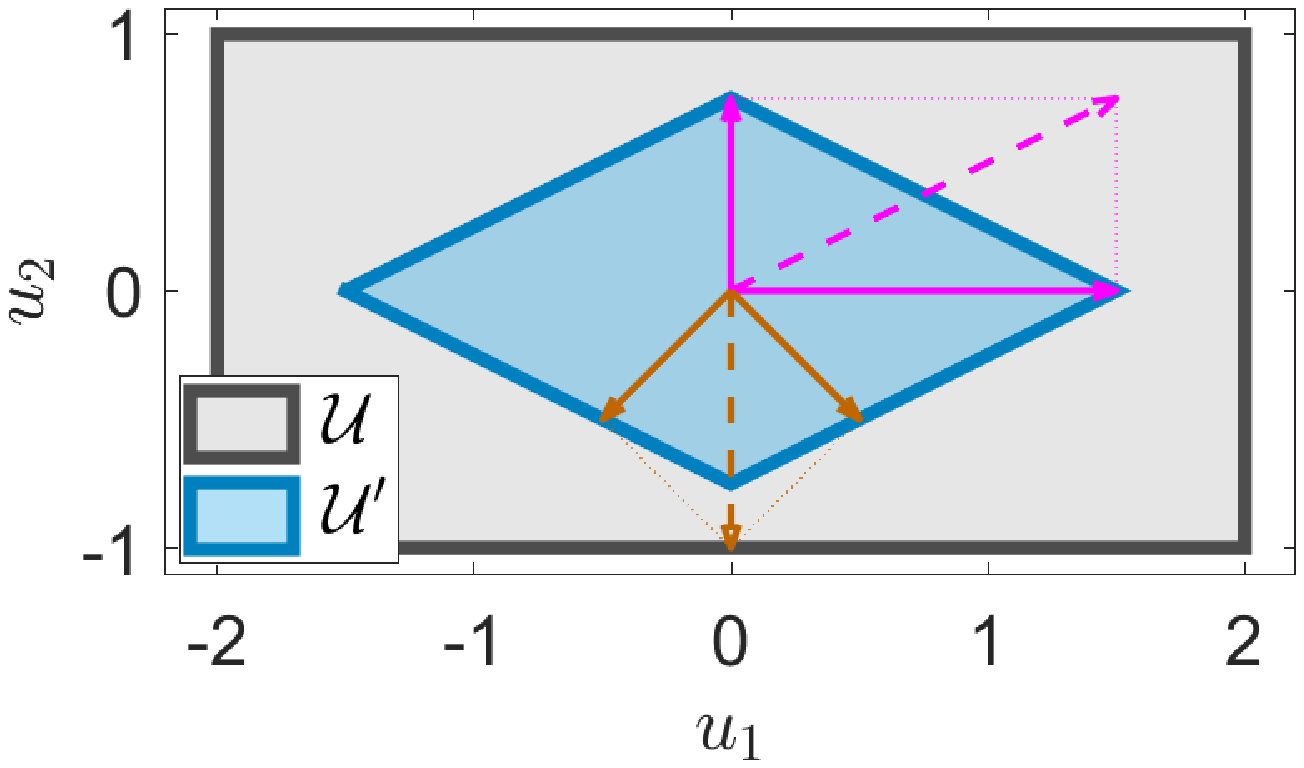}};
			\node[anchor=north,inner sep=0] (image) at (.455\columnwidth,-2.8in) {\includegraphics[width=0.49\columnwidth,trim={0.1in, 0.3in, 0.5in, 0.95in},clip]{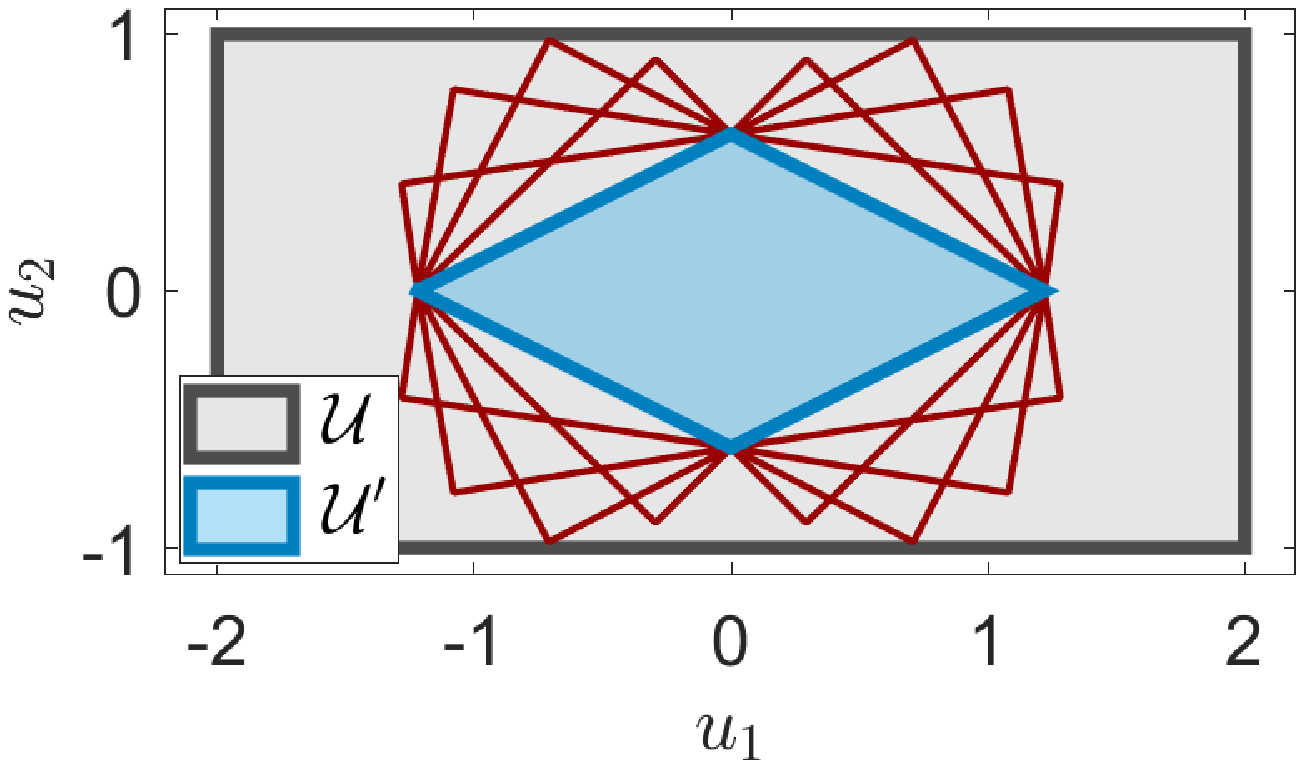}};
			\node [anchor=north] (note) at (-1.75,0.0in) {(a)}; 
			\node [anchor=north] (note) at (-1.75,-1.4in) {(c)}; 
			\node [anchor=north] (note) at (-1.65,-2.84in) {(e)}; 
			\node [anchor=north] (note) at (5.6,0.0in) {(b)}; 
			\node [anchor=north] (note) at (5.6,-1.4in) {(d)}; 
			\node [anchor=north] (note) at (5.6,-2.84in) {(f)};
		\end{tikzpicture}
		
		\regularversion{\vspace{-4pt}}\caption{Given three different control sets $\mathcal{U}$ (gray), the above illustrates possible choices of set $\mathcal{U}'$ (blue) that: left) have the OEP with respect to $\mathcal{U}$, and right) have the QEP with respect to $\mathcal{U}$, as in Definitions~\ref{def:oep}-\ref{def:qep}. The left plots also show how given two orthogonal vectors (solid arrows) in $\mathcal{U}'$, their sum (dashed arrow) must by construction belong to $\mathcal{U}$. The right plots also show various choices of orthogonal hyperplanes (i.e. lines in $\reals^2$) whose intersections (the red right angles) by construction must belong to $\mathcal{U}$.}
		\label{fig:oep}
		\regularversion{\vspace{-14pt}}\extendedversion{\vspace{-6pt}}
	\end{figure}

	The core idea of this subsection is that given a set $\mathcal{U}'$ with the OEP or QEP, if we design our CBFs $\{h_k\}_{k\in[M]}$ one-at-a-time for $(\mathcal{S},\mathcal{U}')$, then, subject to an additional condition on the CBF gradients, we can guarantee a priori that $\{h_k\}_{k\in[M]}$ will have jointly feasible CBF conditions. To show this, we begin with several lemmas about the geometry of the OEP and QEP, first for two constraints, and then for $M$ constraints.
	
	
	\regularversion{\vspace{-2pt}}
	\begin{lemma} \label{lemma:less_than_90}
		Let $\mathcal{U}'$ have the OEP with respect to $\mathcal{U}$. Given two row vectors $A_1,A_2\in\reals^{1\times m}$ and two scalars $b_1,b_2\in\reals$, if $A_1 \cdot A_2 \geq 0$ and there exists $w_1,w_2 \in \mathcal{U}'$ such that 1) $A_1 w_1 \leq b_1$, 2) $A_2 w_2 \leq b_2$, 3) $A_1\cdot w_1 = -\|A_1\|_2\|w_1\|_2$, and 4) $A_2\cdot w_2 = -\|A_2\|_2\|w_2\|_2$, then there exists $z\in\mathcal{U}$ such that $A_1 z \leq b_1$ and $A_2 z \leq b_2$.
	\end{lemma}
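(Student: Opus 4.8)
The plan is to manufacture the required $z$ as the output of the orthogonal extension property (Definition~\ref{def:oep}) applied to the two given witnesses $w_1,w_2$, but only after putting them in the correct order. First I would unpack hypotheses 3) and 4). By the equality case of Cauchy--Schwarz, $A_i\cdot w_i = -\|A_i\|_2\|w_i\|_2$ forces $w_i$ to be a nonpositive scalar multiple of $A_i\transpose$; that is, $w_i = -s_i A_i\transpose$ with $s_i\geq 0$, so $w_i$ is anti-parallel to $A_i$. (The degenerate cases $A_i=0$ or $w_i=0$ merely render the $i$-th inequality vacuous, and the fully degenerate case is handled by $z=0\in\mathcal{U}$.) From $w_1=-s_1A_1\transpose$, $w_2=-s_2A_2\transpose$ and $A_1\cdot A_2\geq0$ I then get $w_1\cdot w_2 = s_1 s_2\,(A_1\cdot A_2)\geq0$, so $\{w_1,w_2\}\subset\mathcal{U}'$ is precisely the kind of collection that Definition~\ref{def:oep} accepts.

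Next I would relabel so that WLOG $\|w_1\|_2\geq\|w_2\|_2$ and feed $(w_1,w_2)$ into the OEP construction \emph{in that order}: set $y_1=w_1$, $y_2 = w_2-\proj_{\{w_1\}}w_2$, and $z=y_1+y_2$, so that $z\in\mathcal{U}$ by the OEP. It remains to verify the two affine inequalities. For the first, since $y_2\perp w_1$ by the Gram--Schmidt construction and $A_1\transpose$ is parallel to $w_1$, the cross term vanishes, $A_1 y_2=0$, hence $A_1 z = A_1 w_1\leq b_1$. For the second, I would compute $z-w_2 = \bigl(1-\tfrac{w_1\cdot w_2}{\|w_1\|_2^2}\bigr)w_1$, so that $A_2(z-w_2) = \bigl(1-\tfrac{w_1\cdot w_2}{\|w_1\|_2^2}\bigr)A_2 w_1$. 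Here $A_2 w_1\leq0$ because $A_2\transpose$ is anti-parallel to $w_2$ and $w_1\cdot w_2\geq0$, while the scalar coefficient is nonnegative because $w_1\cdot w_2\leq\|w_1\|_2\|w_2\|_2\leq\|w_1\|_2^2$ under the ordering $\|w_1\|_2\geq\|w_2\|_2$. Therefore $A_2(z-w_2)\leq0$, giving $A_2 z\leq A_2 w_2\leq b_2$, which completes the argument.

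The step I expect to be the genuine obstacle --- and the reason this is not a one-line corollary of the OEP --- is the ordering. Applying Gram--Schmidt with the shorter vector first causes the projection to over-subtract, and the resulting OEP point can then violate the second constraint; this already happens for $\mathcal{U}'=\{\|u\|_1\leq\gamma\}$, $\mathcal{U}=\{\|u\|_\infty\leq\gamma\}$ whenever $w_2$ is long relative to $w_1$. The key realization is that the sign of the coefficient $1-\tfrac{w_1\cdot w_2}{\|w_1\|_2^2}$ is exactly what controls feasibility of the second inequality, and that placing the longer witness first guarantees this coefficient is nonnegative (and, symmetrically, at least one of the two orderings always qualifies). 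The remaining work is routine bookkeeping: confirming $w_1\cdot w_2\geq0$ so that $\{w_1,w_2\}$ is admissible for Definition~\ref{def:oep}, and tidying the zero-vector degeneracies.
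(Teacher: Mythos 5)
Your proposal is correct and follows essentially the same route as the paper's proof: identify $w_i$ as anti-parallel to $A_i\transpose$ from the Cauchy--Schwarz equality case, order the witnesses so the longer one comes first, form $z = w_1 + \bigl(w_2 - \proj_{\{w_1\}}w_2\bigr)$, invoke the OEP for $z\in\mathcal{U}$, and verify the two inequalities via the same coefficient bound $1 - \tfrac{w_1\cdot w_2}{w_1\cdot w_1}\in[0,1]$. Your explicit check that $w_1\cdot w_2\geq 0$ (so the pair is admissible for Definition~\ref{def:oep}) and your remark on why the ordering matters are slightly more careful than the paper's terse argument, but the substance is identical.
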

	\begin{proof}
		Note that conditions 3 and 4 imply that either 1) $w_1$ is parallel and opposite to $A_1$ when $b_1 < 0$, or 2) that $w_1 = 0$ when $b_1 \geq 0$, and similarly for $A_2$, $b_2$, $w_2$.
		Without loss of generality, assume that $\|w_1\|_2\geq\|w_2\|_2$. Let $w^* = w_2 - \frac{w_2 \cdot w_1}{w_1 \cdot w_1}w_1$. Then $z = w_1 + w^*$ satisfies $A_1 z = A_1 w_1 \leq b_1$ since $w^*$ is orthogonal to $w_1$ and $A_1$, and 
		\vspace{-2pt}$$ A_2 z = A_2 w_2 + \underbrace{A_2 w_1}_{\leq 0} (1 - \underbrace{\textstyle\frac{w_2\cdot w_1}{w_1\cdot w_1}}_{\leq 1})\leq A_2 w_2 \leq b_2 \,.\vspace{-6pt}$$
		By the OEP, $z \in\mathcal{U}$.
	\end{proof}

	\begin{lemma}\label{lemma:oep_all}
		Let $\mathcal{U}'$ have the OEP with respect to $\mathcal{U}$. Given $M$ row vectors $\{A_k\}_{k\in[M]}$ and scalars $\{b_k\}_{k\in[M]}$ with $A_k\in\reals^{1\times m}$, if 1) $A_i\cdot A_j \geq 0$ for all $i\in[M],j\in[M]$, 2) there exists $w_k\in\mathcal{U}'$ such that $A_k w_k \leq b_k$ for all $k\in[M]$, and 3) $A_k w_k = -\|A_k\|_2 \|w_k\|_2$ for all $k\in[M]$, then there exists $z\in\mathcal{U}$ such that $A_k z \leq b_k$ for all $k\in[M]$.
	\end{lemma}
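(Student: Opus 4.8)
The plan is to generalize the two-vector argument of Lemma~\ref{lemma:less_than_90} to $M$ vectors, taking as the candidate point the Gram--Schmidt sum that defines the OEP. First I would unpack condition~3 exactly as in Lemma~\ref{lemma:less_than_90}: the equality $A_k w_k = -\|A_k\|_2\|w_k\|_2$ forces each witness $w_k$ to be either the zero vector (when $b_k \geq 0$) or antiparallel to $A_k$ (when $b_k<0$). Combining antiparallelism with hypothesis~1, $A_i\cdot A_j\geq 0$, yields $w_i\cdot w_j\geq 0$ for all $i,j$, so $\{w_k\}_{k\in[M]}$ is an admissible acute family for the OEP. I would then set $z=\sum_{k} y_k$, the Gram--Schmidt sum of Definition~\ref{def:oep}, for a processing order fixed below; since the ambient space is $\reals^m$, at most $m$ of the $y_k$ are nonzero, so $z$ is an OEP sum and hence $z\in\mathcal{U}$ \emph{directly}. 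The entire burden of the proof is therefore to verify the $M$ scalar conditions $A_k z\leq b_k$.

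The first key step is a decoupling observation mirroring the clean split $A_1 z = A_1 w_1$ in Lemma~\ref{lemma:less_than_90}. Writing $V_{j-1}=\textrm{span}\{w_1,\dots,w_{j-1}\}$ (in processing order) and $z_j=\sum_{\ell\leq j} y_\ell$ for the partial sums, I would use that $y_k$ is orthogonal to $V_{k-1}$, hence to $w_j$ and therefore to $A_j$ (which is parallel to $w_j$) whenever $k>j$. Thus $A_j y_k = 0$ for $k>j$, giving $A_j z = A_j z_j$: vectors incorporated after step $j$ never disturb the $j$-th constraint. This reduces the $M$ vector conditions to $M$ scalar inequalities $A_j z_j\leq b_j$, one per step, and lets the verification proceed by induction on $j$.

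Expanding $A_j z_j = A_j z_{j-1}+A_j y_j$ and using $A_j w_j\leq b_j$ with antiparallelism, each scalar inequality reduces to the crux estimate
\[
	w_j\cdot z_{j-1}\ \geq\ \norm{\proj_{\{w_1,\dots,w_{j-1}\}} w_j}_2^2,
\]
which is precisely the $M$-vector analogue of the bound $\tfrac{w_2\cdot w_1}{w_1\cdot w_1}\leq 1$ used in Lemma~\ref{lemma:less_than_90}. I expect this estimate to be the main obstacle, because — unlike the two-vector case — it is \emph{not} valid for an arbitrary processing order: summing strongly correlated witnesses first can leave $z_{j-1}$ with too small a component along $w_j$ (indeed, ordering by decreasing $\|w_k\|_2$ does not suffice in general). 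The resolution I would pursue is to fix the order by a greedy/pivoted rule — always appending the remaining witness whose residual $w_j-\proj_{\{w_1,\dots,w_{j-1}\}}w_j$ is largest in norm, generalizing the choice $\|w_1\|_2\geq\|w_2\|_2$ — and to carry the stronger invariant $z_n\cdot v\geq \norm{\proj_{V_n} v}_2^2$ for each remaining witness $v$ through the induction. The inductive step then hinges on two scalar facts about the newly added residual $y_n$: that $v\cdot y_n\leq\|y_n\|_2^2$, which follows from the pivoting rule via Cauchy--Schwarz, and that $v\cdot y_n\geq 0$, which is the delicate point and is where acuteness of $\{w_k\}$ must be leveraged, since projecting out a common subspace can in principle decorrelate residuals. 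Finally I would dispose of the degenerate witnesses $w_k=0$ (the $b_k\geq 0$ case), whose constraints carry slack and are checked as in Lemma~\ref{lemma:less_than_90}, and conclude that the constructed $z$ lies in $\mathcal{U}$ by the OEP and satisfies all $M$ inequalities.
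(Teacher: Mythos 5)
Your reduction is sound up to the crux estimate: with $A_j$ antiparallel to $w_j$, the decoupling $A_j y_k = 0$ for $k > j$ and the reformulation of $A_j z_j \le b_j$ as $w_j \cdot z_{j-1} \ge \| \proj_{\{w_1,\dots,w_{j-1}\}} w_j \|_2^2$ are both correct, and you are also right that this estimate fails for the decreasing-norm ordering. The gap is in the repair. The inductive sub-step you flag as delicate, $v \cdot y_n \ge 0$ for every remaining witness $v$, is not merely delicate: it is false in general, and the greedy pivoted ordering does not rescue it, because the residuals of a pairwise-acute family need not remain pairwise acute after projecting out a spanned direction. Concretely, take $m=3$, $w_1=(1,0,0)$, $w_2 = 0.99\,(0.9, s, 0)$, $w_3 = 0.99\,(0.9,-s,0)$ with $s=\sqrt{1-0.81}$; all three lie in the unit ball $\mathcal{U}'$ and are pairwise acute since $w_2\cdot w_3 \propto 0.81 - s^2 = 0.62 > 0$. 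The greedy rule must select $w_1$ first (strictly largest residual norm), after which the residuals of $w_2$ and $w_3$ are antiparallel; whichever is selected second, the construction terminates with $z=(1,\pm 0.99 s,0)$, which gives $z\cdot \hat{w}_k = 0.9 - 0.99 s^2 \approx 0.71 < 0.99 = \|w_k\|_2$ for the remaining index $k$, violating $A_k z \le b_k$. So the strengthened invariant cannot be carried through the induction as stated, for any tie-breaking.

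The conclusion of the lemma still appears to hold in this example --- e.g. $z=(1.1,0,0)$ satisfies all three inequalities and is the Gram--Schmidt sum of the orthogonal pair $(0.55,\pm 0.55,0)\in\mathcal{U}'$, hence lies in $\mathcal{U}$ by the OEP --- which indicates the missing idea: the witness $z$ cannot in general be taken to be a Gram--Schmidt sum of the \emph{given} $w_k$ in any processing order; one must be permitted to pass to a different orthogonal family in $\mathcal{U}'$ (or argue directly about, say, the minimal-norm point of the feasible polyhedron and show it is always an OEP sum). For what it is worth, the paper's own proof is no stronger here: it orders by decreasing $\|w_k\|_2$ and simply asserts that the resulting Gram--Schmidt sum satisfies all $M$ constraints, which the same example refutes. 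You have therefore correctly diagnosed a genuine defect in the published argument, but your proposed fix does not close it.
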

	\regularversion{\vspace{-10pt}}
	\begin{proof}
		The proof follows from that of Lemma~\ref{lemma:less_than_90}. Assume that the vectors are ordered by decreasing $\|w_k\|_2$. If $M \leq m$, then use orthogonal projections to construct a vector $z = \sum_{i\in[M]} (w_i - \proj_{(\{w_j\}_{j\in[i-1]})}w_i)$ that satisfies all $M$ constraints $A_k z \leq b_k$ simultaneously. By the OEP, $z\in\mathcal{U}$. If $M > m$, then because $A_i \cdot A_j \geq 0$ for all $i,j$, the set $\mathcal{W} = \{u\in\reals^m\mid A_k u \leq b_k, \forall k\in[M]\}$ must contain a complete orthant $\mathbb{O}=\{u\in\reals^m \mid O_k u \leq O_k y, \forall k\in[m]\}\subseteq\mathcal{W}$ for some orthogonal set $\{O_k\}_{k\in[m]}$, $O_k\in\reals^{1\times m}$ and some point $y\in\reals^m$. 
		Therefore, at least $l_0 = M - m$ of the constraints $A_k z \leq b_k$ must be redundant, i.e. $l\geq l_0$ of the constraints must be automatically satisfied if the remaining constraints are satisfied. Use the remaining $M-l$ constraints as in the prior case to construct a vector $z\in\mathcal{U}$ satisfying all $M$ inequalities simultaneously.\regularversion{\vspace{-3pt}}
	\end{proof}

	\begin{lemma} \label{lemma:qep}
		Let $\mathcal{U}'$ have the QEP with respect to $\mathcal{U}$. Given two row vectors $A_1,A_2\in\reals^{1\times m}$ and two scalars $b_1,b_2\in\reals$, if $A_1 \cdot A_2 \geq 0$ and there exists $w_1, w_2 \in \mathcal{U}'$ such that $A_1 w_1 \leq b_1$ and $A_2 w_2 \leq b_2$, then there exists $p\in\mathcal{U}$ such that $A_1 p \leq b_1$ and $A_2 p \leq b_2$.
	\end{lemma}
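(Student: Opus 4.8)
The plan is to invoke the QEP by exhibiting $m$ pairwise-orthogonal hyperplanes, each meeting $\mathcal{U}'$, whose unique common point $p$ both lands in $\mathcal{U}$ (via Definition~\ref{def:qep}) and satisfies the two affine inequalities. First I would dispose of the degenerate cases $A_1 = 0$ or $A_2 = 0$, where the corresponding inequality reads $0 \le b_k$ and holds automatically since $A_k w_k \le b_k$ forces $b_k \ge 0$. This lets me assume $\hat{A}_1 := A_1/\|A_1\|_2$ and $\hat{A}_2 := A_2/\|A_2\|_2$ are well defined, and the hypothesis $A_1 \cdot A_2 \ge 0$ gives $\cos\theta := \hat{A}_1 \cdot \hat{A}_2 \in [0,1]$ with $\theta \in [0,\tfrac{\pi}{2}]$.

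Next I would build the orthonormal frame: set $O_1 = \hat{A}_1$, let $O_2$ be the unit vector in $\textrm{span}\{A_1,A_2\}$ orthogonal to $O_1$ and oriented so that $A_2 \cdot O_2 = \|A_2\|_2 \sin\theta \ge 0$, and complete $\{O_1,O_2\}$ to an orthonormal basis $\{O_1,\dots,O_m\}$ of $\reals^m$. Using compactness of $\mathcal{U}'$ (a bounded subset of the compact set $\mathcal{U}$), for each $i$ let $\mu_i = \min_{u\in\mathcal{U}'} O_i \cdot u$ and take $\mathcal{P}_i = \{x \in \reals^m \mid O_i \cdot x = \mu_i\}$. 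Each $\mathcal{P}_i$ is a supporting hyperplane of $\mathcal{U}'$, so $\mathcal{P}_i \cap \mathcal{U}' \ne \emptyset$, and the $\mathcal{P}_i$ are pairwise orthogonal, so Definition~\ref{def:qep} applies and their intersection point $p = \sum_{i\in[m]} \mu_i O_i$ lies in $\mathcal{U}$.

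It then remains to verify both inequalities at $p$. Since $A_1, A_2 \in \textrm{span}\{O_1,O_2\}$, the values $A_1 \cdot p$ and $A_2 \cdot p$ depend only on $\mu_1,\mu_2$. For the first, $A_1 \cdot p = \|A_1\|_2 \mu_1 \le \|A_1\|_2 (\hat{A}_1 \cdot w_1) = A_1 \cdot w_1 \le b_1$, using $\mu_1 \le \hat{A}_1 \cdot w_1$. For the second, decompose $A_2 = \|A_2\|_2 \cos\theta\, O_1 + \|A_2\|_2 \sin\theta\, O_2$, so that $A_2 \cdot p = \|A_2\|_2(\cos\theta\,\mu_1 + \sin\theta\,\mu_2)$; then $\cos\theta,\sin\theta \ge 0$ combined with $\mu_1 \le \hat{A}_1 \cdot w_2$ and $\mu_2 \le O_2 \cdot w_2$ give $A_2 \cdot p \le \|A_2\|_2(\cos\theta(\hat{A}_1 \cdot w_2) + \sin\theta(O_2 \cdot w_2)) = A_2 \cdot w_2 \le b_2$.

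The step I expect to be the crux is deciding where to anchor the hyperplanes. The tempting choice of passing $\mathcal{P}_1,\mathcal{P}_2$ through $w_1,w_2$ fails, because controlling $A_2 \cdot p$ would then demand the unjustified comparison $\hat{A}_1 \cdot w_1 \le \hat{A}_1 \cdot w_2$. Anchoring instead at the global minimizers $\mu_i$ repairs this: the single quantity $\mu_1$ is simultaneously $\le \hat{A}_1 \cdot w_1$ (needed for the first inequality) and $\le \hat{A}_1 \cdot w_2$ (needed for the second), and it is precisely the sign conditions $\cos\theta \ge 0$, $\sin\theta \ge 0$ coming from $A_1 \cdot A_2 \ge 0$ that allow substituting these bounds without reversing an inequality. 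This is also where the QEP, rather than the weaker OEP used in Lemma~\ref{lemma:less_than_90}, is essential, since it is exactly the guarantee that a full orthogonal corner assembled from supporting hyperplanes of $\mathcal{U}'$ remains inside $\mathcal{U}$.
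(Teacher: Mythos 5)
Your proof is correct, but it takes a genuinely different route from the paper's. The paper's argument is geometric and two-dimensional: it anchors the two orthogonal hyperplanes \emph{at the points $w_1$ and $w_2$ themselves} and sweeps their common orientation, so that the intersection point traces the Thales circle on the diameter $\overline{w_1w_2}$ (every point of which lies in $\mathcal{U}$ by the QEP); it then argues that because $A_1\cdot A_2\geq 0$ the vertex $y$ of the feasible wedge is enclosed by that circle, so the wedge must meet the circle at some feasible $p$ (see Fig.~\ref{fig:qep_proof}), and the extension to $\reals^m$ is deferred to Lemma~\ref{lemma:qep_all}. You instead fix the orientation of the orthonormal frame (adapted to $\mathrm{span}\{A_1,A_2\}$) and anchor each hyperplane as a supporting hyperplane of $\mathcal{U}'$, then verify both inequalities by direct computation; this is the choice you correctly identify as the crux, since anchoring at $w_1,w_2$ with a \emph{fixed} orientation would indeed require the unjustified comparison $\hat A_1\cdot w_1\leq\hat A_1\cdot w_2$ (the paper escapes this by varying the orientation, not the anchor). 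What your version buys is rigor and generality: it works directly in $\reals^m$, avoids the pictorial ``$y$ is enclosed by the arc'' step, and would extend almost verbatim to the $M$-constraint setting of Lemma~\ref{lemma:qep_all}. The only soft spot is your justification that the minima $\mu_i$ are attained: a bounded subset of a compact set need not be compact, so you should either assume $\mathcal{U}'$ closed (true in every example in the paper) or replace the minimizers by near-minimizers and pass to the limit using closedness of $\mathcal{U}$.
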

	\regularversion{\vspace{-10pt}}\begin{proof}
		Consider the following figure:\extendedversion{\vspace{1.5in}} \\
		\begin{minipage}{\columnwidth}
		\centering
	\resizebox{0.9\columnwidth}{!}{%
		\begin{tikzpicture}
			\node[anchor=south east,inner sep=0] (image) at (0,0) {\includegraphics[width=\columnwidth,clip,trim={0.35in, 0.2in, 0.45in, 0.5in}]{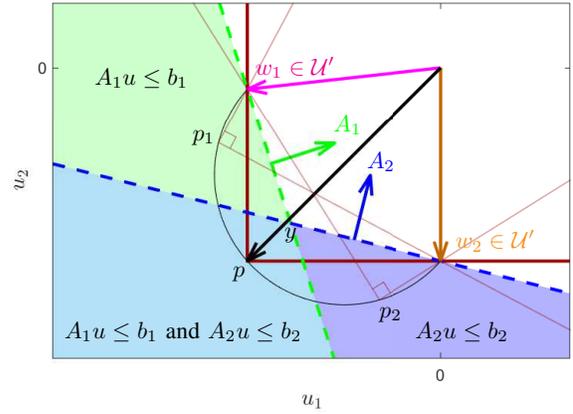}}; 
			\node [anchor=south east] (note) at (-3.2,4.1) {\color{green}$A_1$}; 
			\node [anchor=south east] (note) at (-2.7,3.55) {\color{blue}$A_2$};
			\node [anchor=south east] (note) at (-3.6,4.95) {\color{magenta}$w_1\in\mathcal{U}'$};
			\node [anchor=south east] (note) at (-0.6,2.35) {\color{BurntOrange}$w_2\in\mathcal{U}'$};
			\node [anchor=south east] (note) at (-4.95,1.9) {$p$};
			\node [anchor=south east] (note) at (-5.4,3.95) {$p_1$};
			\node [anchor=south east] (note) at (-2.6,1.3) {$p_2$};
			\node [anchor=south east] (note) at (-5.8,4.8) {$A_1 u \leq b_1$};
			\node [anchor=south east] (note) at (-1.0,1.0) {$A_2 u \leq b_2$};
			\node [anchor=south east] (note) at (-4.1,1) {$A_1 u \leq b_1$ and $A_2 u \leq b_2$};
			\node [anchor=south east] (note) at (-4.17,2.48) {$y$};
		\end{tikzpicture}
	}%
		\regularversion{\vspace{-5pt}}
		\captionof{figure}{Visualization for Lemma~\ref{lemma:qep} proof.}
		\label{fig:qep_proof}
		\vspace{6pt}
		\end{minipage}
		Since $w_1,w_2\in\mathcal{U}'$, by the QEP, any two orthogonal lines (i.e. hyperplanes in $\reals^2$) that intersect $w_1$ and $w_2$ (i.e. intersect points in $\mathcal{U}'$) must meet at a point in $\mathcal{U}$, such as the points $p,p_1,p_2$ above. That is, every point on the black arc must belong to $\mathcal{U}$. Next, since $A_1 \cdot A_2$ is at least zero, the point $y$ where the hyperplanes $\{u \mid A_1 u = b_1\}$ and $\{u \mid A_2 u = b_2\}$ (dashed lines) intersect must be enclosed by the black arc. It follows that at least one point, above labeled $p$, on this arc must satisfy both inequalities simultaneously.\regularversion{\vspace{-2pt}}
	\end{proof}

	\begin{lemma} \label{lemma:qep_all}
		Let $\mathcal{U}'$ have the QEP with respect to $\mathcal{U}$. Given $M$ row vectors $\{A_k\}_{k\in[M]}$ and scalars $\{b_k\}_{k\in[M]}$ with $A_k\in\reals^{1\times m}$, if $A_i \cdot A_j \geq 0$ for all $i\in[M],j\in[M]$ and there exists $w_k \in \mathcal{U}'$ such that $A_k w_k \leq b_k$ for all $k\in[M]$, then there exists $p\in\mathcal{U}$ such that $A_k p \leq b_k$ for all $k\in[M]$.
	\end{lemma}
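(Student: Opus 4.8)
The plan is to prove Lemma~\ref{lemma:qep_all} as the QEP counterpart of Lemma~\ref{lemma:oep_all}, reusing the two-constraint result Lemma~\ref{lemma:qep} in the same role that Lemma~\ref{lemma:less_than_90} plays inside Lemma~\ref{lemma:oep_all}. Accordingly, I would split the argument into the two cases $M \le m$ and $M > m$, with the substance living in the first case and the second reducing to it by a redundancy argument. Throughout, the hypothesis $A_i \cdot A_j \ge 0$ for all $i,j$ is what keeps the normals from pointing into opposing half-spaces, and it is used in both cases.

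For $M \le m$, the plan is to generalize the geometric argument of Lemma~\ref{lemma:qep}. Since each $w_k \in \mathcal{U}'$, I would consider the family of all $m$-tuples of mutually orthogonal hyperplanes in which the $k$-th hyperplane passes through $w_k$ for $k \in [M]$, padded (when $M < m$) by $m - M$ further orthogonal hyperplanes chosen to meet $\mathcal{U}'$. By the QEP, the common intersection point of every such tuple lies in $\mathcal{U}$, so these intersection points sweep out a region contained in $\mathcal{U}$ — the higher-dimensional analogue of the black arc in Fig.~\ref{fig:qep_proof}. The condition $A_i \cdot A_j \ge 0$ then guarantees that the vertex of the feasible polyhedron $\mathcal{W} = \{u : A_k u \le b_k,\ \forall k\}$ formed by the binding constraints is enclosed by this region, so at least one intersection point $p$ in the family satisfies all $M$ inequalities $A_k p \le b_k$ simultaneously while still lying in $\mathcal{U}$.

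For $M > m$, I would argue exactly as in the proof of Lemma~\ref{lemma:oep_all}: because $A_i \cdot A_j \ge 0$ for all $i,j$, the polyhedron $\mathcal{W}$ contains a complete orthant, so at least $M - m$ of the constraints are redundant, i.e.\ automatically satisfied once the remaining ones are. Discarding those redundant constraints leaves at most $m$ active inequalities, which I then handle by the $M \le m$ case above to obtain $p \in \mathcal{U}$ satisfying all $M$ inequalities.

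The hard part will be the enclosure step in the $M \le m$ case, namely making rigorous the claim that the feasible corner of $\mathcal{W}$ is covered by the region swept out by the QEP intersection points. In two dimensions this is the intuitively clear statement that $y$ sits inside the arc in Fig.~\ref{fig:qep_proof}, but in $\reals^m$ it requires a genuine monotonicity/continuity argument in place of the picture, and it is precisely here that the assumption $A_i \cdot A_j \ge 0$ is indispensable: if any pair of normals had negative inner product, the corresponding feasible vertex could escape the swept region and no QEP intersection point would be feasible. The orthogonalization and padding bookkeeping, by contrast, I expect to be routine.
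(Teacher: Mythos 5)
Your proposal follows essentially the same route as the paper's proof: reduce to at most $m$ non-redundant constraints via the orthant/redundancy argument borrowed from Lemma~\ref{lemma:oep_all}, then sweep the family of orthogonal hyperplane tuples through the $w_k$ (padded to $m$ planes meeting $\mathcal{U}'$), invoke the QEP to place every resulting intersection point in $\mathcal{U}$, and use $A_i\cdot A_j\geq 0$ to argue the binding vertex $y$ is enclosed so that some swept point $p$ is feasible. The enclosure step you flag as the hard part is likewise the step the paper leaves at the level of "by the same argument as in Lemma~\ref{lemma:qep}," so your account matches the paper in both structure and level of rigor.
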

	\regularversion{\vspace{-10pt}}\begin{proof}
		The argument is similar to that in Lemma~\ref{lemma:qep}, now extended to higher dimension. Let $\{\mathcal{P}_k\}_{k\in[m]}$ be a set of orthogonal hyperplanes in $\reals^m$ that meet at some point $p\in\reals^m$ and satisfy $\mathcal{P}_k\cap\mathcal{U}'\neq\emptyset,\forall k\in[m]$. Let $\mathbb{P} = \{ u\in\reals^m \mid P_k u \leq P_k p, \forall k\in[m]\}$ be the orthant of $\reals^m$ originating from $p$ and enclosed by $\{\mathcal{P}_k\}_{k\in[m]}$, for appropriate vectors $\{P_k\}_{k\in[m]},P_k\in\reals^{1\times m}$. 
		As in Lemma~\ref{lemma:oep_all}, there are at most $m$ non-redundant constraints. Let $\mathcal{N}$ be the set of indices of these non-redundant constraints, and construct $\mathbb{P}$ so that $w_k \in \partial \mathbb{P}, \forall k \in \mathcal{N}$, analogous to how the solid red lines ($\partial\mathbb{P}$) intersect the vectors $w_1,w_2$ in Fig.~\ref{fig:qep_proof}. Then the point $p$ will lie on the boundary of an $m$-hypersphere $\mathbb{S}$ analogous to the black arc in Fig.~\ref{fig:qep_proof}, and all possible choices of $p$ must lie in $\mathcal{U}$ by the QEP. Let $y\in\reals^m$ satisfy $A_k y = b_k, \forall k \in \mathcal{N}$. By the same argument as in Lemma~\ref{lemma:qep}, $\mathbb{S}$ must enclose at least one such $y$. Thus, there exists at least one $p\in\mathbb{S}\subseteq\mathcal{U}$ that satisfies all $M$ inequalities simultaneously. 
	\end{proof}
	\regularversion{\vspace{-3pt}}

	We now apply the above geometric observations to the concurrent feasibility of several SCBFs or CBFs as follows.\regularversion{\vspace{-2pt}}

	\regularversion{\vspace{-1pt}}\begin{theorem} \label{thm:less_than_90}
		Let $\mathcal{U}'$ be any set with the OEP with respect to $\mathcal{U}$. Let $\{h_k\}_{k\in[M]}$ each be an SCBF for $(\mathcal{S},\mathcal{U}')$. If $\{h_k\}_{k\in[M]}$ are non-interfering on $\mathcal{S}$ as in Definition~\ref{def:noninterfere}, then the set $\boldsymbol\mu_\textrm{all}$ in \eqref{eq:cbf_condition} is nonempty for all $(q,v)\in\mathcal{S}\cap(\cap_{k\in[M]}\mathcal{H}_k)$.
	\end{theorem}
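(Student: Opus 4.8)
The plan is to recognize the statement as a pointwise corollary of Lemma~\ref{lemma:oep_all}: at each state the two SCBF hypotheses (non-interference together with the antiparallel feasibility built into the SCBF definition) supply exactly the three hypotheses of that lemma, after which the OEP finishes the argument. There is no genuinely hard analytic step; the work is in correctly translating the definitions into the lemma's algebraic form.

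First I would fix an arbitrary $(q,v)\in\mathcal{S}\cap(\cap_{k\in[M]}\mathcal{H}_k)$ and rewrite each CBF condition in the affine form used by Lemma~\ref{lemma:oep_all}. Using \eqref{eq:control_affine}, I set $A_k = \nablav h_k(q,v) g_2(q) \in\reals^{1\times m}$ and $b_k = \alpha_k(-h_k(q,v)) - \nablaq h_k(q,v) g_1(q) v$, so that $\boldsymbol\mu_k(q,v,\mathcal{Y}) = \{u\in\mathcal{Y}\mid A_k u \leq b_k\}$ for any $\mathcal{Y}$. In particular, exhibiting a single $z\in\mathcal{U}$ with $A_k z\leq b_k$ for all $k$ is exactly what it means for $\boldsymbol\mu_\textrm{all}$ in \eqref{eq:cbf_condition} to be nonempty at $(q,v)$.

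Next I would verify the three hypotheses of Lemma~\ref{lemma:oep_all}. Hypothesis 1, $A_i\cdot A_j\geq 0$, is immediate from the non-interference assumption (Definition~\ref{def:noninterfere}) evaluated at $(q,v)\in\mathcal{S}$, since $A_k$ is precisely $\nablav h_k(q,v) g_2(q)$. For hypotheses 2 and 3 I would invoke the SCBF property: since $(q,v)\in\mathcal{H}_k\cap\mathcal{S}$ and $h_k$ is an SCBF for $(\mathcal{S},\mathcal{U}')$, Definition~\ref{def:scbf} guarantees a point $w_k\in\boldsymbol\mu_k(q,v,\mathcal{U}')\subseteq\mathcal{U}'$ of the form $w_k = -c_k A_k\transpose$ with $c_k\geq 0$. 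This gives hypothesis 2 ($w_k\in\mathcal{U}'$ with $A_k w_k\leq b_k$) directly, and a one-line computation gives hypothesis 3, namely $A_k w_k = -c_k\|A_k\|_2^2 = -\|A_k\|_2\|w_k\|_2$, using $\|w_k\|_2 = c_k\|A_k\|_2$.

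Finally I would apply Lemma~\ref{lemma:oep_all} to obtain a single $z\in\mathcal{U}$ with $A_k z\leq b_k$ for all $k\in[M]$, i.e. $z\in\boldsymbol\mu_k(q,v,\mathcal{U})$ for every $k$, hence $z\in\boldsymbol\mu_\textrm{all}(q,v)$; since $(q,v)$ was arbitrary, nonemptiness holds on all of $\mathcal{S}\cap(\cap_{k\in[M]}\mathcal{H}_k)$. The one step most worth checking is the translation of the antiparallel SCBF condition $u = -c[\nablav h_k g_2]\transpose$ into the extremal alignment $A_k w_k = -\|A_k\|_2\|w_k\|_2$ demanded by the lemma; I expect this to be the only place where the SCBF notion (rather than a generic CBF) is actually used, which is exactly why the theorem is stated for SCBFs and why the bulk of the geometric difficulty is already absorbed into Lemma~\ref{lemma:oep_all}.
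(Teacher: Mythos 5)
Your proposal is correct and matches the paper's own proof essentially line for line: same definitions of $A_k$ and $b_k$, non-interference giving $A_i\cdot A_j\geq 0$, the SCBF property supplying the anti-parallel $w_k\in\mathcal{U}'$, and Lemma~\ref{lemma:oep_all} closing the argument. The only difference is that you spell out the computation $A_k w_k = -c_k\|A_k\|_2^2 = -\|A_k\|_2\|w_k\|_2$ explicitly, which the paper leaves implicit in the phrase ``anti-parallel.''
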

	\regularversion{\vspace{-10pt}}\begin{proof}
		Let $A_k = \nablav h_k(q,v) g_2(q)$ and $b_k = \alpha_k(-h_k(q,v))$ $ - \nablaq h_k(q,v)g_1(q) v$ for all $k\in[M]$, where $\alpha_k$ each come from \eqref{eq:control_set}. It follows from Definition~\ref{def:noninterfere} that $A_i \cdot A_j \geq 0$ for all ${i\in[M],j\in[M]}$ everywhere in $\mathcal{S}$. It follows from Definition~\ref{def:scbf} that for each ${k\in[M]}$, there exists $w_k\in\mathcal{U}'$ such that $A_k w_k \leq b_k$ and $w_k$ is {\color{black}anti-}parallel to $A_k$. Lemma~\ref{lemma:oep_all} then implies that these $M$ inequalities are simultaneously feasible for some $u$ in the full set $\mathcal{U}$, which is equivalent to the sets $\{\boldsymbol\mu_k\}_{k\in[M]}$ having a nonempty intersection $\boldsymbol\mu_\textrm{all}$.\regularversion{\vspace{-2pt}}
		%
	\end{proof}

	\begin{theorem} \label{thm:less_than_90_qep}
		Let $\mathcal{U}'$ be any set with the QEP with respect to $\mathcal{U}$. Let $\{h_k\}_{k\in[M]}$ each be a CBF for $(\mathcal{S},\mathcal{U}')$. If $\{h_k\}_{k\in[M]}$ are non-interfering on $\mathcal{S}$ as in Definition~\ref{def:noninterfere}, then the set $\boldsymbol\mu_\textrm{all}$ in \eqref{eq:cbf_condition} is nonempty for all $(q,v)\in\mathcal{S}\cap(\cap_{k\in[M]}\mathcal{H}_k)$.
	\end{theorem}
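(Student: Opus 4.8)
The plan is to mirror the proof of Theorem~\ref{thm:less_than_90} almost verbatim, substituting Lemma~\ref{lemma:qep_all} for Lemma~\ref{lemma:oep_all}. The whole point of the QEP is that it provides a stronger extension guarantee than the OEP, so we will no longer need the anti-parallel structure supplied by the SCBF definition and a plain CBF will suffice.

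First I would fix an arbitrary point $(q,v)\in\mathcal{S}\cap(\cap_{k\in[M]}\mathcal{H}_k)$ and, exactly as before, set $A_k = \nablav h_k(q,v) g_2(q)$ and $b_k = \alpha_k(-h_k(q,v)) - \nablaq h_k(q,v) g_1(q) v$, where $\alpha_k$ is the class-$\mathcal{K}$ function associated with $h_k$ through \eqref{eq:control_set}. By the affine form of $\dot h_k$ in \eqref{eq:control_affine}, the CBF condition $\dot h_k(q,v,u)\leq\alpha_k(-h_k(q,v))$ is then equivalent to the affine inequality $A_k u \leq b_k$, so that membership in $\boldsymbol\mu_k$ is membership in a half-space.

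Next I would check the two hypotheses that Lemma~\ref{lemma:qep_all} requires. The non-interfering assumption (Definition~\ref{def:noninterfere}) gives $A_i\cdot A_j = (\nablav h_i(q,v)g_2(q))\cdot(\nablav h_j(q,v)g_2(q)) \geq 0$ for all $i,j\in[M]$ at the chosen point. And since each $h_k$ is a CBF for $(\mathcal{S},\mathcal{U}')$ and $(q,v)\in\mathcal{H}_k\cap\mathcal{S}$, the set $\boldsymbol\mu(q,v,\mathcal{U}')$ is nonempty by Definition~\ref{def:cbf}, so there exists $w_k\in\mathcal{U}'$ with $A_k w_k \leq b_k$. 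The key point to emphasize is that, unlike in Theorem~\ref{thm:less_than_90}, no directional constraint on $w_k$ is imposed here: Lemma~\ref{lemma:qep_all} only asks that each half-space $\{u\mid A_k u \leq b_k\}$ contain some point of $\mathcal{U}'$, which is why the weaker CBF hypothesis (rather than SCBF) is enough.

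Finally, applying Lemma~\ref{lemma:qep_all} produces a point $p\in\mathcal{U}$ satisfying $A_k p \leq b_k$ for all $k\in[M]$, i.e. $p\in\boldsymbol\mu_k(q,v,\mathcal{U})$ for every $k$, hence $p\in\boldsymbol\mu_\textrm{all}(q,v)$ and this intersection is nonempty. Since $(q,v)$ was arbitrary in $\mathcal{S}\cap(\cap_{k\in[M]}\mathcal{H}_k)$, the claim follows. I expect no real obstacle: all the geometric content has been offloaded into Lemma~\ref{lemma:qep_all}, and the only subtlety worth spelling out is the justification for discarding the anti-parallel requirement, namely that the QEP extension acts on arbitrary half-space-feasible points rather than on the orthogonal sum of anti-parallel vectors as the OEP does.
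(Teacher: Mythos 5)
Your proposal is correct and matches the paper's own argument, which likewise reuses the setup of Theorem~\ref{thm:less_than_90} ($A_k = \nablav h_k g_2$, $b_k = \alpha_k(-h_k) - \nablaq h_k g_1 v$) and simply swaps Lemma~\ref{lemma:oep_all} for Lemma~\ref{lemma:qep_all} because the $w_k$ are no longer guaranteed anti-parallel to the $A_k$. Your explicit note on why the anti-parallel requirement can be dropped is exactly the point the paper makes.
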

	\regularversion{\vspace{-10pt}}\begin{proof}
		The proof is identical to that of Theorem~\ref{thm:less_than_90}, except that $h_k$ are CBFs instead of SCBFs, so each $w_k$ is not guaranteed to be {\color{black}anti-}parallel to each $A_k$, respectively. Thus, we apply Lemma~\ref{lemma:qep_all} instead of Lemma~\ref{lemma:oep_all}.\regularversion{\vspace{-2pt}}
	\end{proof}

	Theorems~\ref{thm:less_than_90}-\ref{thm:less_than_90_qep} constitute our first method for ensuring concurrent feasibility of multiple CBFs.
	Note that we provide theorems under both the OEP and QEP separately, because as shown in Fig.~\ref{fig:oep}, the OEP is less conservative (i.e. allows for larger $\mathcal{U}'$), but is only applicable when the stricter SCBF condition \eqref{eq:scbf_control_set} holds (which is often the case in practice).
	%
	We also note the following remark about the computation of the gradients of the CBFs $h_k$ for Definition~\ref{def:noninterfere} and Theorems~\ref{thm:less_than_90}-\ref{thm:less_than_90_qep}.\regularversion{\vspace{-2pt}}
	
	\regularversion{\vspace{-1pt}}\begin{remark} \label{remark:high_degree}
		If $\kappa$ is \regularversion{an HOCBF}\extendedversion{a High Order CBF} as in \cite{Recursive_CBF}, then there exists a function $\phi\in\mathcal{K}$ such that $h(q,v) = \dot{\kappa}(q,v) - \phi(-\kappa(q))$ is a CBF as in Definition~\ref{def:cbf}. Moreover, $\dot{\kappa}(q,v) = \nablaq \kappa(q) g_1(q) v$, so $\nablav h(q,v) \equiv \nablaq \kappa(q) g_1(q) g_2(q)$. That is, $\nablav h(q,v)$ 1) is independent of $v$ and 2) does not depend on the function $\phi$. Thus, 
		Definition~\ref{def:noninterfere}
		can be checked using only the gradients of the constraint functions $\kappa_i,\kappa_j$ and the dynamics \eqref{eq:model} without knowing the exact CBFs (i.e. the choices of $\phi$).\regularversion{\vspace{-2pt}}
	\end{remark}
	\regularversion{\vspace{-1pt}}
	
	Referring to Example~\ref{example}, Theorems~\ref{thm:less_than_90}-\ref{thm:less_than_90_qep} address the problem of determining a viability domain $\mathcal{A}$ when $\gamma \in (0,1]$. However, how to address Example~\ref{example} when $\gamma > 1$ still needs to be determined, as is done in Section~\ref{sec:three_constraints}.
%

	\begin{example}\label{ex:fixed_less_than_90}
		Consider the problem in Example~\ref{example} with $\gamma = 0.75$. The set $\mathcal{U}' = \{ u \in \reals^2 \mid \| u\|_1 \leq \frac{2}{1+\sqrt{2}}\}$ has the QEP with respect to $\mathcal{U}$ as in Example~\ref{example}. Using the new set $\mathcal{U}'$ as the assumed allowable input bounds, the method in \cite{Recursive_CBF,Automatica} yields the CBFs $h_i(q,v) = \dot{\kappa}_i(q) - \sqrt{-\frac{4}{1+\sqrt{2}}\kappa_i(q)}$. Unlike in Example~\ref{example}, the new set $\mathcal{A} = \mathcal{S}\cap\mathcal{H}_1\cap\mathcal{H}_2$ is indeed a viability domain. The impact of constructing the CBFs for $(\mathcal{S},\mathcal{U}')$ instead of $(\mathcal{S},\mathcal{U})$ is visualized in the left half of Fig.~\ref{fig:fixes}.
	\end{example}
	\regularversion{\vspace{-6pt}}


	\begin{figure}
		\centering
		\begin{tikzpicture}
			\node[anchor=south west,inner sep=0] (image) at (0,0) {\includegraphics[width=0.49\columnwidth,clip,trim={0.5in, 0.03in, 0.8in, 0.3in}]{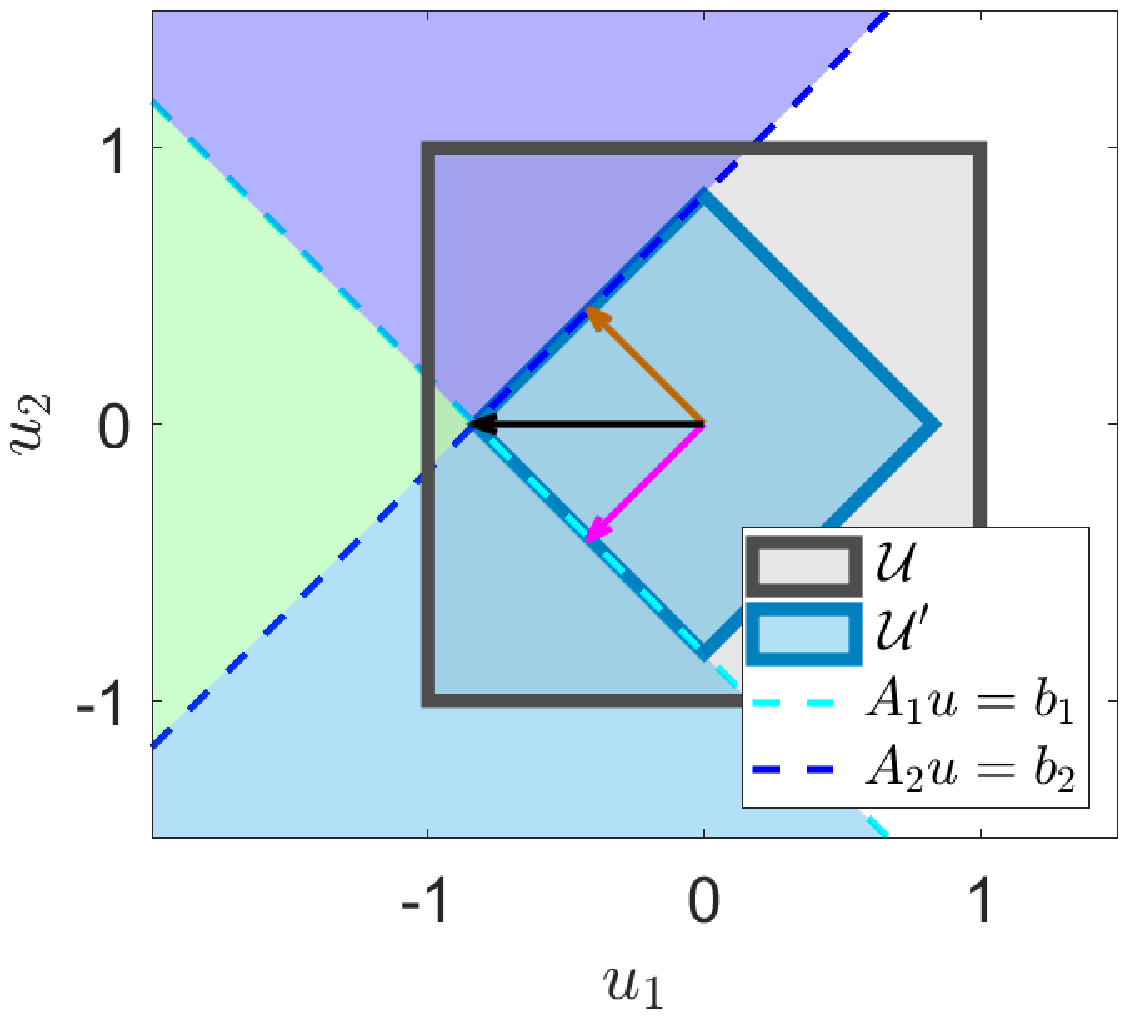}};
			\node[anchor=south east,inner sep=0] (image) at (\columnwidth,0) {\includegraphics[width=0.49\columnwidth,clip,trim={0.5in, 0.03in, 0.8in, 0.3in}]{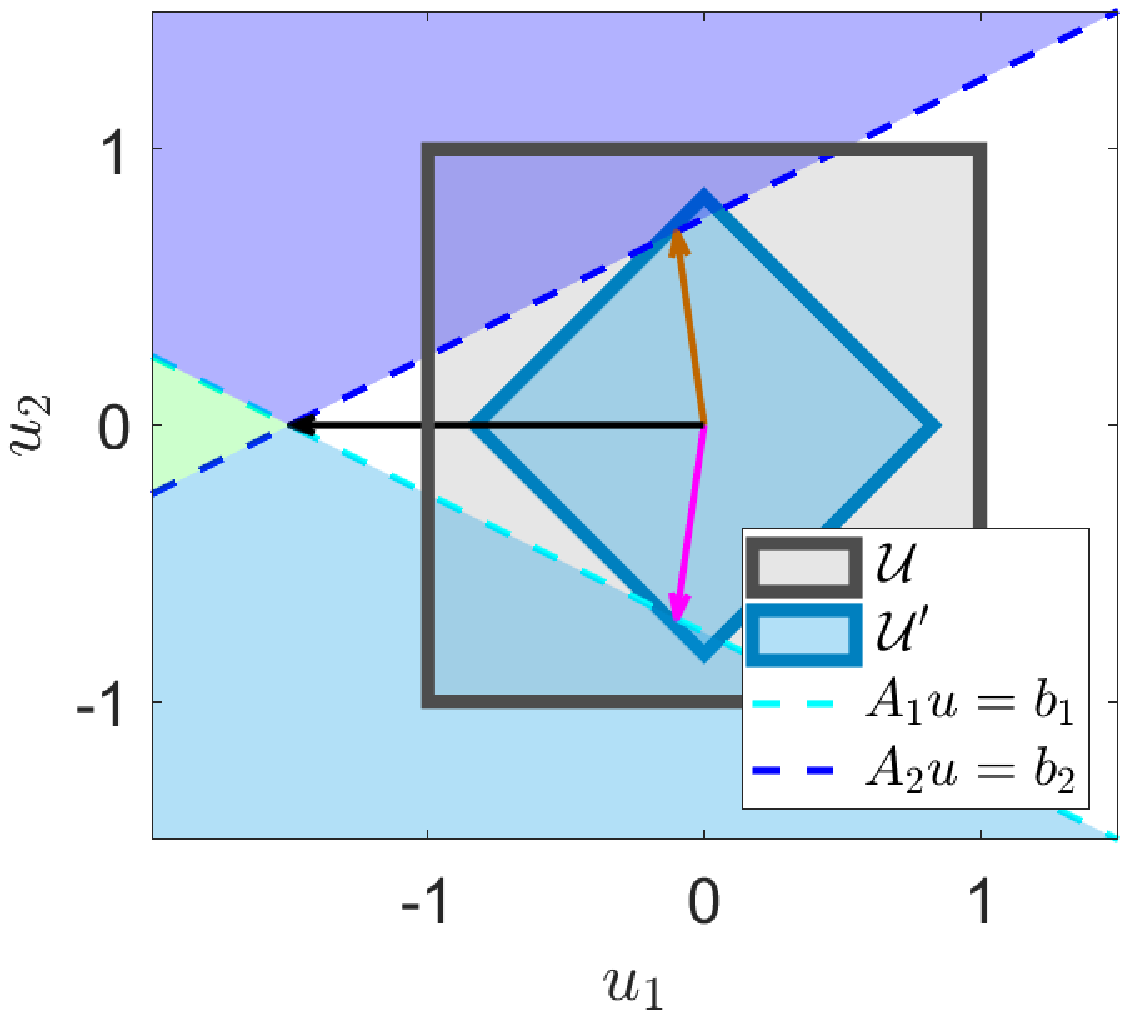}};
			\node [anchor=south west] (note) at (2.2,1.5) {\color{magenta}$w_1$}; 
			\node [anchor=south west] (note) at (2.2,2.3) {\color{BurntOrange}$w_2$}; 
			\node [anchor=south west] (note) at (1.55,2.2) {$z$}; 
			\node [anchor=south west] (note) at (0.8,3.25) {$A_2 u \leq b_2$}; 
			\node [anchor=south west] (note) at (0.8,0.58) {$A_1 u \leq b_1$};  
			%
			\node [anchor=south west] (note) at (6.42,1.62) {\color{magenta}$w_1$}; 
			\node [anchor=south west] (note) at (6.85,2.3) {\color{BurntOrange}$w_2$}; 
			\node [anchor=south west] (note) at (5.25,2.2) {$z$}; 
			\node [anchor=south west] (note) at (4.9,3.25) {$A_2 u \leq b_2$}; 
			\node [anchor=south west] (note) at (4.9,0.58) {$A_1 u \leq b_1$}; 
		\end{tikzpicture}
		\caption{Left: Visualization of the control space at a point $x_0\in\partial\mathcal{H}_1\cap\partial\mathcal{H}_2$ using $\mathcal{H}_1,\mathcal{H}_2$ as in Example~\ref{ex:fixed_less_than_90}. Compared to Fig.~\ref{fig:example}, the CBFs are now designed over $\mathcal{U}'$ (the blue diamond) instead of $\mathcal{U}$, so there exists $z\in\mathcal{U}$ satisfying both $A_1 z\leq b_1$ and $A_2 z \leq b_2$. Right: Visualization of the control space at the point $x_0$ in Example~\ref{ex:bad_greater_than_90}. Because the CBFs $h_1,h_2$ do not satisfy Definition~\ref{def:noninterfere}, it may be the case that every point $z$ satisfying both inequalities $A_k z \leq b_k$ lies outside the set $\mathcal{U}$, so $x_0$ should not lie in the viability domain.}
		\label{fig:fixes}
		\regularversion{\vspace{-10pt}}\extendedversion{\vspace{-6pt}}
	\end{figure}
	
	\subsection{Modifying the Quadratic Program} \label{sec:qps}
	
	The work in the prior section required Definition~\ref{def:noninterfere} to apply to all states in $\mathcal{S}$. Now, recall that Lemma~\ref{lemma:nagumo} is effectively only a condition on the boundary of the invariant set $\mathcal{A}$. Noting this, in Section~\ref{sec:three_constraints}, we will focus only on the boundary of $\mathcal{A}$, so unlike in Section~\ref{sec:two_constraints} we will no longer be able to guarantee that $\boldsymbol\mu_\textrm{all}(q,v)$ is nonempty for $(q,v)\in\interior(\mathcal{A})$. Rather, we will only be able to guarantee that there exists $u\color{black}\in\mathcal{U}\color{black}$ such that $\dot{h}_k(q,v,u) \leq 0$ for all $k$ in the \emph{active set}
	\begin{equation}
		\mathcal{I}(q,v) = \{k\in[M] \mid h_k(q,v) = 0 \} \label{eq:active_set} \,.
	\end{equation}
	{\color{black}That is, after employing the algorithm in Section~\ref{sec:three_constraints}, the condition $\dot{h}_k(q,v,u) \leq \alpha_k(-h_k(q,v))$ in \eqref{eq:first_qp_conditions} will be feasible for all $k\in\mathcal{I}(q,v)$, but possibly not for $k\in[M]\setminus\mathcal{I}(q,v)$. By \cite{CBFs_Necessary}, if $\mathcal{A}$ in Lemma~\ref{lemma:all_invariance} is a viability domain, then there exists a set of class-$\mathcal{K}$ functions $\{\alpha_k^*\}_{k\in[M]}$, such that \eqref{eq:first_qp} is always feasible. Instead of computing such a set 
	directly, we let $\alpha_k^*(\lambda) = \delta_k \alpha_k(\lambda)$, where $\delta_k$ is a free variable. We then}
	let $J_k > 0$, and modify the QP \eqref{eq:first_qp} as in \cite{Choice_of_class_k} to
	\begin{subequations}
		\begin{align}
			\hspace{-4pt}&u(q,v) = \argmin_{u\in\mathcal{U},\delta_k \geq 1} \|u - u_\textrm{nom}(q,v)\|_2^2 + \sum_{k\in[M]} J_k \delta_k \label{eq:the_qp_norm} \hspace{-3pt}\\
			&\;\;\;\;\;\;\;\;\textrm{ s.t. } \dot{h}_k(q,v,u) \leq \delta_k\alpha_k(-h_k(q,v)), \forall k\in[M] \hspace{-2pt} \label{eq:the_qp_conditions}
		\end{align}\label{eq:the_qp}%
	\end{subequations}%
	\vspace{-12pt}

	\begin{proposition} \label{prop:qp}
		Suppose the conditions of Lemma~\ref{lemma:all_invariance} hold. Then 1) the control law \eqref{eq:the_qp} will render $\mathcal{A}$ forward invariant for as long as \eqref{eq:the_qp} is feasible, 2) \eqref{eq:the_qp} is feasible for all $(q,v)\in\mathcal{A}$ for which $\mathcal{I}(q,v)$ has cardinality of 0 or 1, and 3) \eqref{eq:the_qp} is feasible for all $(q,v)\in\mathcal{A}$ if $\mathcal{A}$ is a viability domain.
	\end{proposition}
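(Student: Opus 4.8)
The unifying observation behind all three parts is that each $\alpha_k\in\mathcal{K}$ satisfies $\alpha_k(0)=0$, so wherever a constraint is active, i.e. $h_k(q,v)=0$ (equivalently $k\in\mathcal{I}(q,v)$ from \eqref{eq:active_set}), the relaxed condition \eqref{eq:the_qp_conditions} reads $\dot{h}_k(q,v,u)\leq\delta_k\alpha_k(0)=0$ independently of the value $\delta_k\geq 1$. Hence the relaxation variables only weaken the constraints strictly inside each $\mathcal{H}_k$; on $\partial\mathcal{H}_k$ the relaxed condition \eqref{eq:the_qp_conditions} coincides exactly with the unrelaxed condition \eqref{eq:first_qp_conditions}.

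For Part~1, I would use the fact, noted after Lemma~\ref{lemma:nagumo}, that forward invariance of $\mathcal{A}$ depends only on the flow $\dot{x}$ at $\partial\mathcal{A}$. Since the relaxation is inactive there, the control law \eqref{eq:the_qp} enforces on $\partial\mathcal{A}$ precisely the conditions required by Lemma~\ref{lemma:all_invariance}: for every active CBF it yields $\dot{h}_k(q,v,u)\leq 0$, while the conditions $\kappa_i(q)=0\implies\dot{\kappa}_i\leq 0$ and $\eta_j(v)=0\implies\dot{\eta}_j\leq 0$ hold by the hypotheses of Lemma~\ref{lemma:all_invariance}. Forward invariance of $\mathcal{A}$ then follows from (the proof of) Lemma~\ref{lemma:all_invariance}, and holds for as long as a feasible pair $(u,\{\delta_k\})$ exists.

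For Part~2, feasibility is shown by exhibiting an admissible point. If $\mathcal{I}(q,v)=\emptyset$, then $h_k(q,v)<0$ and so $\alpha_k(-h_k(q,v))>0$ for every $k$; taking $u=0\in\mathcal{U}$ and each $\delta_k\geq 1$ large enough that $\delta_k\alpha_k(-h_k(q,v))\geq\dot{h}_k(q,v,0)$ satisfies \eqref{eq:the_qp_conditions}. If $\mathcal{I}(q,v)=\{k_0\}$, then since $h_{k_0}$ is a CBF for $(\mathcal{A},\mathcal{U})$ there exists, by Definition~\ref{def:cbf}, some $u^\ast\in\mathcal{U}$ with $\dot{h}_{k_0}(q,v,u^\ast)\leq\alpha_{k_0}(-h_{k_0}(q,v))=0$; this satisfies the active constraint with $\delta_{k_0}=1$, and the remaining inactive constraints are satisfied with the same $u^\ast$ by enlarging their $\delta_k$ as above.

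Part~3 is the crux and where I expect the main difficulty. If $\mathcal{A}$ is a viability domain, then by Definition~\ref{def:viable} there is a control under which the trajectory from $(q,v)$ stays in $\mathcal{A}\subseteq\mathcal{H}_k$ for all $k$. For each active index $k\in\mathcal{I}(q,v)$, the scalar map $t\mapsto h_k(x(t))$ is nonpositive and equals zero at $t_0$, so its right derivative at $t_0$ is nonpositive; since this derivative equals $\dot{h}_k(q,v,u^\ast)$ for the (right limit of the) viable control $u^\ast\in\mathcal{U}$, one obtains a single $u^\ast\in\mathcal{U}$ with $\dot{h}_k(q,v,u^\ast)\leq 0$ simultaneously for all $k\in\mathcal{I}(q,v)$. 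Feeding this $u^\ast$ into the QP satisfies every active constraint with $\delta_k=1$ (using $\alpha_k(0)=0$) and every inactive constraint by enlarging $\delta_k$ as in Part~2, proving feasibility. The delicate step is precisely this passage from the viability-domain \emph{definition} to the pointwise inequality $\dot{h}_k(q,v,u^\ast)\leq 0$: it requires arguing that the nonpositive right derivative of $h_k$ along the viable trajectory is attained by an admissible instantaneous input $u^\ast\in\mathcal{U}$, which needs care regarding the regularity (e.g. right-continuity, or a limiting/compactness argument exploiting $\mathcal{U}$ compact) of the viable control, together with the observation that only the active indices in $\mathcal{I}(q,v)$ must be controlled while the inactive ones are handled for free by the relaxation.
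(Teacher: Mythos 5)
Your Parts 1 and 2 are correct and are the intended arguments: everything hinges on your opening observation that $\alpha_k(0)=0$ makes the relaxation variables vacuous exactly where $h_k=0$, so the $\delta_k$ only buy feasibility at inactive constraints and never weaken the boundary behavior that Nagumo's condition (and hence Lemma~\ref{lemma:all_invariance}) actually requires. For Part 3 you take a genuinely different route from the paper. The paper does not argue from Definition~\ref{def:viable} directly; it leans on the converse-CBF result of \cite{CBFs_Necessary}, cited in the sentences immediately preceding \eqref{eq:the_qp}: if $\mathcal{A}$ is a viability domain, there exist class-$\mathcal{K}$ functions $\alpha_k^*$ for which the unrelaxed conditions \eqref{eq:first_qp_conditions} are jointly feasible at every $(q,v)\in\mathcal{A}$; the resulting $u^*$ satisfies the active constraints of \eqref{eq:the_qp_conditions} with $\delta_k=1$ (since $\alpha_k^*(0)=0=\alpha_k(0)$) and the inactive ones after enlarging $\delta_k$, exactly as in your Part 2. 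Your trajectory-based argument aims for the same $u^*$ but, as you yourself flag, leaves open whether the nonpositive right difference quotient of $h_k$ along the viable trajectory is realized by a single admissible input. That gap is closable with hypotheses the paper already makes: writing $\dot h_k(q,v,u)=a_k(x)+b_k(x)u$ (affine in $u$ by \eqref{eq:control_affine}), the difference quotient of $h_k$ over $[t_0,t_0+\tau]$ equals $a_k(x_0)+b_k(x_0)\bar u_\tau+o(1)$, where $\bar u_\tau=\frac{1}{\tau}\int_{t_0}^{t_0+\tau}u(s)\,ds$ lies in $\mathcal{U}$ by convexity; compactness of $\mathcal{U}$ gives a limit $u^*\in\mathcal{U}$ along some $\tau_n\downarrow 0$, and this single $u^*$ yields $\dot h_k(q,v,u^*)\le 0$ for all active $k$ simultaneously. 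With that supplement your proof is complete and self-contained, whereas the paper's version is shorter by outsourcing the existence of $u^*$ to the cited necessity result.
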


	
	
	\subsection{When the CBFs are Interfering} \label{sec:three_constraints}
	
	Next, consider what happens in Example~\ref{ex:fixed_less_than_90} if $\gamma > 1$.
	
	\begin{example} \label{ex:bad_greater_than_90}
		Consider the problem in Example~\ref{ex:fixed_less_than_90} for $\gamma = 1.25$. Using the same strategy as in Example~\ref{ex:fixed_less_than_90} yields the new CBFs $h_i(q,v) = \dot{\kappa}_i(q,v) - \sqrt{-\frac{4\gamma}{1+\sqrt{2}}\kappa_i(q)}$. Denote $\mathcal{A} = \mathcal{S}\cap \mathcal{H}_1\cap\mathcal{H}_2$ and let $x_0 = (q,v) = (-\frac{1+\sqrt{2}}{4\gamma},0,1,0)\in\partial\mathcal{A}$. 
		From the right side of Fig.~\ref{fig:fixes}, we see that there is no $u\in\mathcal{U}$ that satisfies both CBF conditions at $x_0$. That is, Nagumo's necessary condition for forward invariance of $\mathcal{A}$ is violated at $x_0$, so $\mathcal{A}$ is not a viability domain. 
	\end{example}
	\regularversion{\vspace{-2pt}}

	The difference between Example~\ref{ex:fixed_less_than_90} and Example~\ref{ex:bad_greater_than_90} is that in Example~\ref{ex:bad_greater_than_90}, the CBFs $h_1,h_2$ do not satisfy Definition~\ref{def:noninterfere}, so Theorems~\ref{thm:less_than_90}-\ref{thm:less_than_90_qep} do not apply. Thus, we need additional tools to systematically remove states such as $x_0$ in Example~\ref{ex:bad_greater_than_90} from $\mathcal{A}$ in such cases. We begin by presenting a solution to this simple example before discussing a general algorithm.

	\begin{example}\label{ex:fixed_greater_than_90}
		Consider the problem in Example~\ref{ex:bad_greater_than_90}. Introduce $\kappa_3(q) = q_1$, resulting in the constraint geometry in Fig.~\ref{fig:three_constraints}. 
		Let $\mathcal{H}_1$ and $\mathcal{H}_2$ be as in Example~\ref{ex:bad_greater_than_90}, and using \cite{Recursive_CBF,Automatica}, we can derive the CBF $h_3(q,v) = \dot{\kappa}_3(q,v) - \sqrt{-\frac{4}{1+\sqrt{2}}\kappa_3(q)}$. Then $\mathcal{A} = \mathcal{S}\cap\mathcal{H}_1 \cap \mathcal{H}_2 \cap \mathcal{H}_3$ is a viability domain for the sets $\mathcal{S}$ and $\mathcal{U}$ (see the green set on the right side of Fig.~\ref{fig:three_constraints}).
	\end{example}
	\regularversion{\vspace{-2pt}}

	\begin{figure}
		\centering
	\resizebox{0.53\columnwidth}{!}{%
		\begin{tikzpicture}
			\node[anchor=south west,inner sep=0] (image) at (0,0) {\includegraphics[width=0.6\columnwidth,clip,trim={0.85in, 0.15in, 0.9in, 0.3in}]{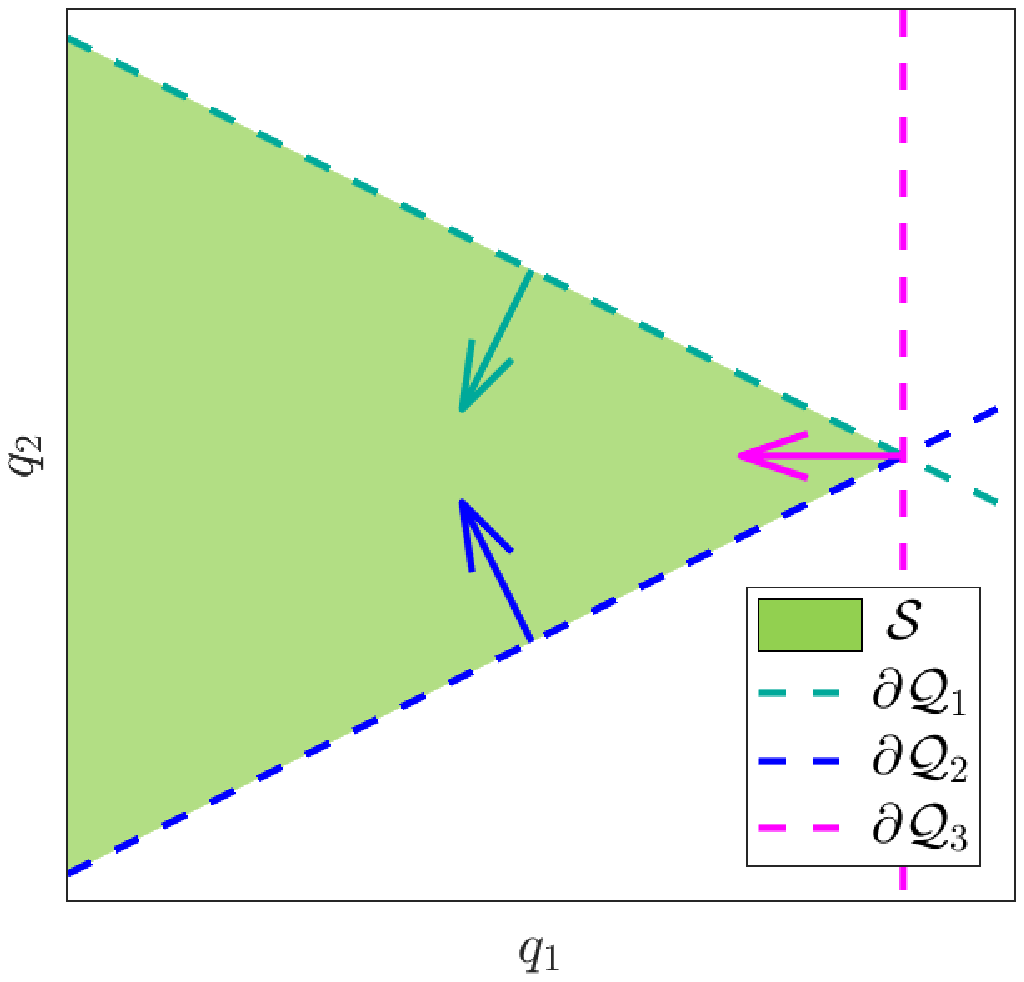}}; 
			\node [anchor=south west] (note) at (1.4,2.65) {\color{JungleGreen}$\nablaq h_1$}; 
			\node [anchor=south west] (note) at (1.4,2.05) {\color{blue}$\nablaq h_2$};
			\node [anchor=south west] (note) at (2.8,2.35) {\color{magenta}$\nablaq h_3$};
		\end{tikzpicture}
	}
	\hspace{-6pt}
	\resizebox{0.46\columnwidth}{!}{%
		\begin{tikzpicture}
			\node[anchor=south west,inner sep=0] (image) at (0,0) {\includegraphics[width=0.5\columnwidth,clip,trim={0.5in, 0.0in, 0.85in, 0.2in}]{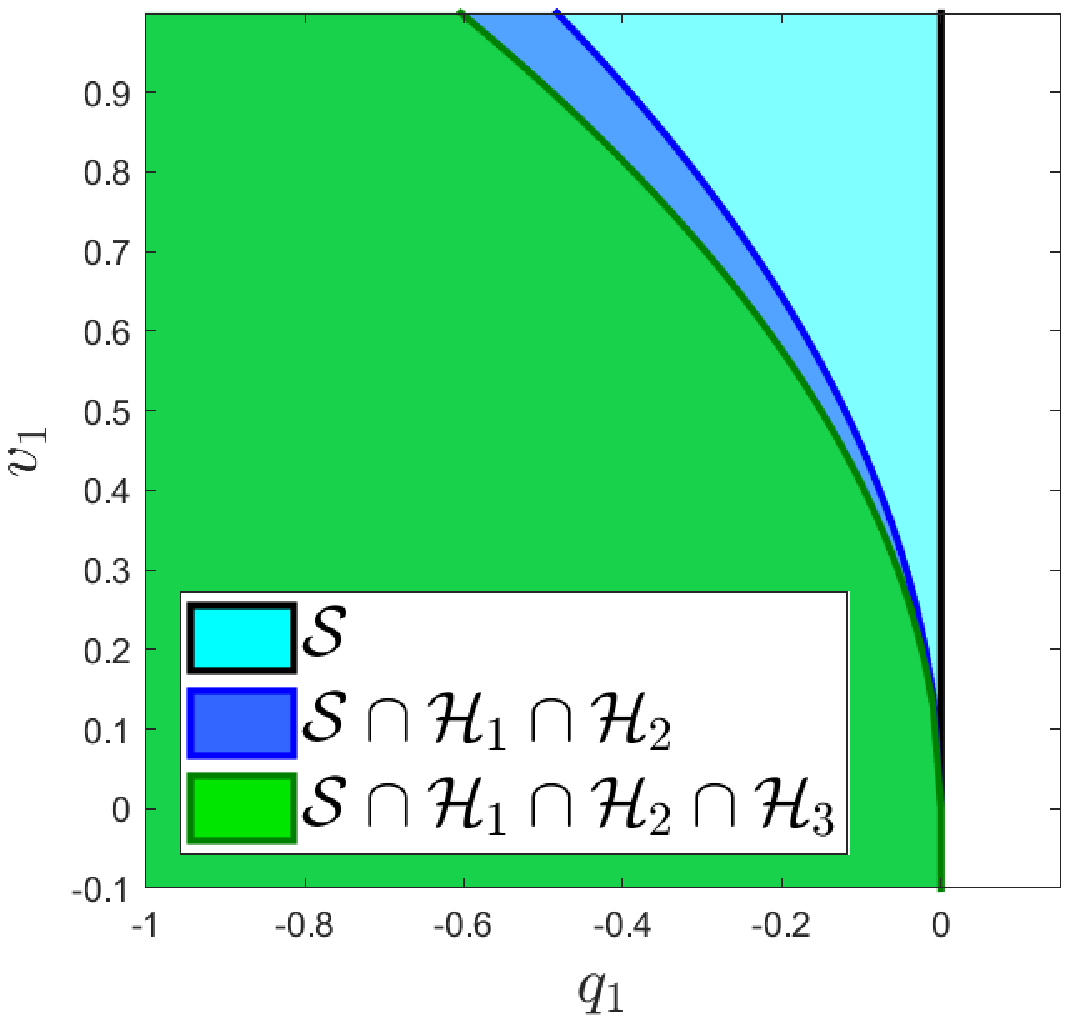}}; 
			\node [anchor=south west] (note) at (0.45,4.2) {Allowable $(q_1,v_1)$ when};
			\node [anchor=south west] (note) at (1.25,3.8) {$q_2=v_2=0$}; 
		\end{tikzpicture}
	}
		\regularversion{\vspace{-14pt}}\caption{
			Left: The CBFs $h_1,h_2$ do not satisfy Definition~\ref{def:noninterfere}, so Theorems~\ref{thm:less_than_90}-\ref{thm:less_than_90_qep} do not apply. To remedy this, we add a third constraint $\kappa_3$ and associated CBF $h_3$ to remove the remaining states where \eqref{eq:the_qp} is infeasible. Right: Fixing $q_2=v_2=0$, we show the allowable states $(q_1,v_1)$ according to 1) the safe set (cyan), 2) the CBFs in Example~\ref{ex:bad_greater_than_90} (blue), and 3) the CBFs in Example~\ref{ex:fixed_greater_than_90} (green).
		}
		\label{fig:three_constraints}
		\regularversion{\vspace{-10pt}}
	\end{figure}

	Example~\ref{ex:fixed_greater_than_90} improves upon Example~\ref{ex:bad_greater_than_90} by further limiting the system's velocity $v_1$ so as to remove all the points where \eqref{eq:the_qp} is infeasible. This can be done either by adjusting $h_1$ and $h_2$, or by adding the additional CBF $h_3$. The first approach amounts to a joint tuning of all CBFs, which is challenging, so we instead focus on generalizing the latter strategy.
	
	\begin{figure}
	\vspace{-12pt}
	\begin{algorithm}[H]
	\begin{algorithmic}[1]
		\fontsize{9.85}{11.8}\selectfont
		\Require $\mathcal{S}$ in \eqref{eq:safe_set}, $\mathcal{U}'$ possess{\color{black}ing} QEP (or OEP) w.r.t. $\mathcal{U}$, CBFs (or SCBFs) $\{h_k\}_{k=1}^{M_0}$ for $(\mathcal{S},\mathcal{U}')$ satisfying Lemma~\ref{lemma:all_invariance}
		\State $\mathcal{X} \gets \mathcal{S}\cap (\cap_{k=1}^M \mathcal{H}_{k})$
		\State $\mathcal{D} = \cup_{i\in[M_0],j\in[M_0],i\neq j}(\partial \mathcal{H}_i\cap\partial \mathcal{H}_j)$
		\State $M \gets M_0$
		\State $\mathcal{E} \gets \textrm{getInfeasibleSet}(\mathcal{D})$
		\While{$\mathcal{E}\neq\emptyset$}
		\State $\mathcal{E}_c \gets \textrm{getCluster}(\mathcal{E})$
		\State $h_{M+1} \gets \textrm{getCBF}(\mathcal{E}_c,\mathcal{X},\mathcal{U}')$
		\State $\mathcal{X} \gets \mathcal{X}\cap \mathcal{H}_{M+1}$ 
		\State $M \gets M+1$
		\State $\mathcal{D} = \cup_{i\in[M],j\in[M],i\neq j}(\partial \mathcal{H}_i\cap\partial \mathcal{H}_j)$
		\State $\mathcal{E} \gets \textrm{getInfeasibleSet}(\mathcal{D})$
		\EndWhile
		\State $\mathcal{A} \gets \mathcal{X}$
		\State \Return $\mathcal{A}$, $\{h_k\}_{k=1}^M$
	\end{algorithmic}
	\caption{Get Viability Domain}
	\label{alg:add_cbfs}
	\end{algorithm}
	\regularversion{\vspace{-24pt}}\extendedversion{\vspace{-16pt}}
	\end{figure}

	To this end, we propose Algorithm~\ref{alg:add_cbfs}. Here, $\{h_k\}_{k\in[M]}$ is a working set of CBFs and $\mathcal{X}$ is a working domain. 
	Let $\mathcal{D}$ be a set of candidate points where $\eqref{eq:the_qp}$ could be infeasible, namely all points where at least two CBFs are active as in \eqref{eq:active_set}, as Proposition~\ref{prop:qp} guarantees feasibility of \eqref{eq:the_qp} at all other points. Next, let $\mathcal{E}$ be the subset of $\mathcal{D}$ where \eqref{eq:the_qp} is actually infeasible, where $\mathcal{E}$ is more expensive to compute than $\mathcal{D}$. 
	Next, let $\textrm{getCluster()}$ be a function that divides all the points in $\mathcal{E}$ into clusters (e.g. see Fig.~\ref{fig:attitude}), and then returns a single cluster $\mathcal{E}_c\subseteq\mathcal{E}$ for focus. Given this cluster, the function $\textrm{getCBF()}$ then determines a new CBF $h_{M+1}$ for $(\mathcal{X},\mathcal{U}')$ such that the cluster $\mathcal{E}_c$ is entirely outside the CBF set $\mathcal{H}_{M+1}$. By Assumption~\ref{assumption}, such a CBF $h_{M+1}$ will always exist. Note also the use of the set $\mathcal{U}'$ satisfying Definition \ref{def:oep} or \ref{def:qep} in $\textrm{getCBF()}$; this is done to reduce the number of points where $h_{M+1}$ might conflict with the existing CBFs $\{h_k\}_{k\in[M]}$. Algorithm~\ref{alg:add_cbfs} then adds $h_{M+1}$ to the working set of CBFs and shrinks the working domain to $\mathcal{X}\cap\mathcal{H}_{M+1}$ (e.g. see Fig.~\ref{fig:three_constraints}). The \textbf{while} block then repeats until either all the CBFs are jointly feasible or the domain $\mathcal{X}$ becomes empty.
	
	Note that the exact mechanics of grouping points into clusters during $\textrm{getCluster()}$ and of generating CBFs during $\textrm{getCBF()}$ will be specific to the system under study. 
	An example of these steps is described in Section~\ref{sec:simulations}. 
	
	\regularversion{\vspace{-3pt}}\begin{proposition} \label{prop:algorithm}
		If Algorithm~\ref{alg:add_cbfs} converges, then $\mathcal{A}$ is a viability domain as in Definition~\ref{def:viable} and the controller \eqref{eq:the_qp} is always feasible and renders $\mathcal{A}$ forward invariant. 
	\end{proposition}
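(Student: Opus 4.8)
The plan is to combine the termination condition of Algorithm~\ref{alg:add_cbfs} with Proposition~\ref{prop:qp} to show that \eqref{eq:the_qp} is feasible at \emph{every} point of $\mathcal{A}$, and then to invoke forward invariance together with Definition~\ref{def:viable}. First I would pin down exactly when \eqref{eq:the_qp} is feasible. Because of the slack variables $\delta_k\geq 1$, any inactive constraint (a point where $h_k(q,v)<0$) has a strictly positive right-hand side $\delta_k\alpha_k(-h_k(q,v))$ that can be made arbitrarily large by increasing $\delta_k$, so inactive constraints never obstruct feasibility. For an active constraint ($h_k(q,v)=0$) we have $\alpha_k(0)=0$, so the condition \eqref{eq:the_qp_conditions} reduces to $\dot{h}_k(q,v,u)\leq 0$ independent of $\delta_k$. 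Hence \eqref{eq:the_qp} is feasible at $(q,v)\in\mathcal{A}$ if and only if there exists $u\in\mathcal{U}$ with $\dot{h}_k(q,v,u)\leq 0$ for every $k\in\mathcal{I}(q,v)$, with $\mathcal{I}$ the active set \eqref{eq:active_set}. This is precisely the condition tested by $\textrm{getInfeasibleSet}$.

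Next, I would split $\mathcal{A}$ by the cardinality of $\mathcal{I}$. By Proposition~\ref{prop:qp} (part 2), \eqref{eq:the_qp} is already feasible at every point with $|\mathcal{I}(q,v)|\in\{0,1\}$. The remaining points, those with $|\mathcal{I}(q,v)|\geq 2$, are exactly the points lying in some $\partial\mathcal{H}_i\cap\partial\mathcal{H}_j$ with $i\neq j$, i.e. the set $\mathcal{D}$ computed in the final iteration of Algorithm~\ref{alg:add_cbfs}. Since the hypothesis is that the algorithm converges, the \textbf{while} loop terminates with $\mathcal{E}=\textrm{getInfeasibleSet}(\mathcal{D})=\emptyset$, so \eqref{eq:the_qp} is feasible at every point of $\mathcal{D}$ as well. (If instead $\mathcal{X}$ empties, then $\mathcal{A}=\emptyset$ and all claims hold vacuously.) Combining the two cases, \eqref{eq:the_qp} is feasible at every $(q,v)\in\mathcal{A}$.

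Finally, I would check that the hypotheses of Lemma~\ref{lemma:all_invariance} are preserved by the algorithm, so that Proposition~\ref{prop:qp} (part 1) applies to the terminal CBF collection. Each CBF added by $\textrm{getCBF}$ is, by Assumption~\ref{assumption}, a CBF for the then-current working domain $\mathcal{X}\supseteq\mathcal{A}$ over $\mathcal{U}'\subseteq\mathcal{U}$, hence also a CBF for $(\mathcal{A},\mathcal{U})$; and since every iteration only shrinks $\mathcal{H}_\textrm{all}$, and therefore $\interior(\mathcal{H}_\textrm{all})$, the Nagumo-type implications on $\kappa_i$ and $\eta_j$ required by Lemma~\ref{lemma:all_invariance}, which hold on the larger interior, continue to hold on the smaller one. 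Thus by Proposition~\ref{prop:qp} (part 1) the controller \eqref{eq:the_qp} renders $\mathcal{A}$ forward invariant for as long as it is feasible; since it is feasible on all of $\mathcal{A}$, it remains feasible along every trajectory that stays in $\mathcal{A}$, so $\mathcal{A}$ is rendered forward invariant. Because \eqref{eq:the_qp} always returns $u\in\mathcal{U}$ and $\mathcal{A}\subseteq\mathcal{S}$, Definition~\ref{def:viable} is satisfied and $\mathcal{A}$ is a viability domain.

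The main obstacle is the feasibility characterization in the first step: one must argue cleanly that the slack variables neutralize every inactive constraint, so that feasibility of the full slacked QP collapses to the purely active-set condition checked by $\textrm{getInfeasibleSet}$, and then verify that $\mathcal{D}$ captures exactly the $|\mathcal{I}|\geq 2$ points so that the case-split with Proposition~\ref{prop:qp} (part 2) is exhaustive. The forward-invariance bookkeeping, namely preservation of Lemma~\ref{lemma:all_invariance}'s hypotheses under the monotone shrinkage of the working domain, is routine by comparison.
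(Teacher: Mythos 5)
Your proposal is correct and follows essentially the argument the paper intends: reduce feasibility of \eqref{eq:the_qp} to the active-set condition via the slack variables, use Proposition~\ref{prop:qp} to cover the points with $|\mathcal{I}(q,v)|\leq 1$, observe that the remaining points are exactly those in $\mathcal{D}$ where termination with $\mathcal{E}=\emptyset$ certifies feasibility, and then apply Proposition~\ref{prop:qp} and Lemma~\ref{lemma:all_invariance} to conclude forward invariance and hence viability. No gaps.
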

	\regularversion{\vspace{-3pt}}

	Algorithm~\ref{alg:add_cbfs} is similar to typical viability domain computation algorithms \cite{power_systems,zonotopes_sampled_data,viability_definitions,expanding_polytope_ci,parameterized_linear_rci,viability_kernel_recrusive_improved,barrier_domain_of_attraction}, except that the set $\mathcal{A}$ is parameterized by a collection of CBFs. Since these CBFs are defined for a control set $\mathcal{U}'$ possessing the QEP (or OEP), we can reduce the number of points that we need to check for infeasibility to only the points where 1) multiple CBFs are active as in \eqref{eq:active_set} and 2) the active CBFs violate the condition in Definition~\ref{def:noninterfere}. We next illustrate the implementation of Algorithm~\ref{alg:add_cbfs} by example on a nonlinear system.

	\extendedversion{
	\begin{remark} \label{rem:choices}
		The choice of the functions $\textrm{getCluster()}$ and $\textrm{getCBF()}$ in Algorithm~\ref{alg:add_cbfs} will also affect the conservativeness of the resulting set $\mathcal{A}$ compared to the viability kernel, and the time of convergence of the algorithm. A very small cluster size could result in many added CBFs and an explosion in computation. Note also that Algorithm~\ref{alg:add_cbfs} can be either coded and run autonomously, or followed step-by-step ``by hand'' by a practiced engineer.
	\end{remark}
	}

	\section{Application to Orientation Control} \label{sec:simulations}

	Let $q \in \{ q \in \reals^4 \mid \|q\|_2 = 1 \}$ be the quaternion describing the orientation of a spherically symmetric rigid body with unit moments of inertia, and let $\omega =(\omega_1,\omega_2,\omega_3)\in \reals^3$ be the angular velocity of the body. The dynamics are
	\begin{equation}
		\dot{q} = \frac{1}{2} \begin{bmatrix}
			0 & \omega_3 & -\omega_2 & \omega_1 \\ \omega_3 & 0 & \omega_1 & \omega_2 \\ \omega_2 & -\omega_1 & 0 & \omega_3 \\ -\omega_1 & -\omega_2 & -\omega_3 & 0
		\end{bmatrix} q, \; \dot{\omega} = u \,. \label{eq:attitude_dynamics}
	\end{equation}
	Suppose a sensitive instrument faces towards an axis $\hat{a}$ fixed to the body, and must avoid pointing towards the fixed directions $\hat{b}_1,\hat{b}_2$ in Fig.~\ref{fig:attitude} by angles $\theta_1,\theta_2$ (red cones in Fig.~\ref{fig:attitude}). These constraints can be expressed as $\kappa_1(q) = q\transpose P(\hat{a},\hat{b}_1,\theta_1) q$ and $\kappa_2(q) = q\transpose P(\hat{a},\hat{b}_2,\theta_2) q$, where $P$ is constant with respect to the state $(q,\omega)$ and is constructed as in \cite{Mesbahi2004}. Let $\mathcal{U} = \{ u \in \reals^3 \mid \|u\|_\infty \leq u_\textrm{max}\}$ for some $u_\textrm{max}$. Assume the angular velocity is bounded by $\eta(\omega) = \|\omega\|_\infty - \omega_\textrm{max}$, where the $\infty$-norm can be expressed as 6 continuously differentiable constraint functions $\{\eta_j\}_{j\in[6]}$. 
	
	First, we find a set $\mathcal{U}'$ with the OEP with respect to $\mathcal{U}$. Example~\ref{ex:oep} tells us that $\mathcal{U}' = \{ u\in\reals^3 \mid \|u\|_2 \leq \frac{u_\textrm{max}}{\sqrt{3}}\}$ is one such set, from which we derive SCBFs $h_{\kappa,1},h_{\kappa,2}$ for $(\mathcal{S},\mathcal{U}')$ given by $h_{\kappa,i}(q,\omega) = \dot{\kappa}_i(q,\omega) - \sqrt{-\beta_i \kappa_i(q)}$ for some $\beta_i\in\reals_{>0}$. We note that each $\eta_j$ already satisfies the definition of an SCBF on $(\mathcal{S},\mathcal{U}')$, so denote these SCBFs as $h_{\eta,j} \equiv \eta_j$. The eight SCBFs together satisfy the conditions of Lemma~\ref{lemma:all_invariance}. 
	{\color{black}We now} focus only on the $h_{\kappa,1},h_{\kappa,2}$ SCBFs. If the corresponding sets $\mathcal{H}_{\kappa,1},\mathcal{H}_{\kappa,2}$ satisfy $\partial \mathcal{H}_{\kappa,1} \cap \partial \mathcal H_{\kappa,2} = \emptyset$, then $\mathcal{A}$ in Lemma~3 is a viability domain and we are done. {\color{black}Here}, we chose $\theta_1,\theta_2$ sufficiently large that this does not hold\footnote{All code and parameters used can be found at \url{https://github.com/jbreeden-um/phd-code/tree/main/2023/ACC\%20Multiple\%20Control\%20Barrier\%20Functions}}, as \regularversion{\color{black}shown}\extendedversion{indicated} by the intersection of the red cones in Fig.~\ref{fig:attitude}. 
		

	\begin{figure}
		\centering
		\begin{tikzpicture}
			\node[anchor=south west,inner sep=0] (image) at (0,0) {\includegraphics[width=\columnwidth,clip,trim={0.2in, 0.2in, 0.4in, 0.46in}]{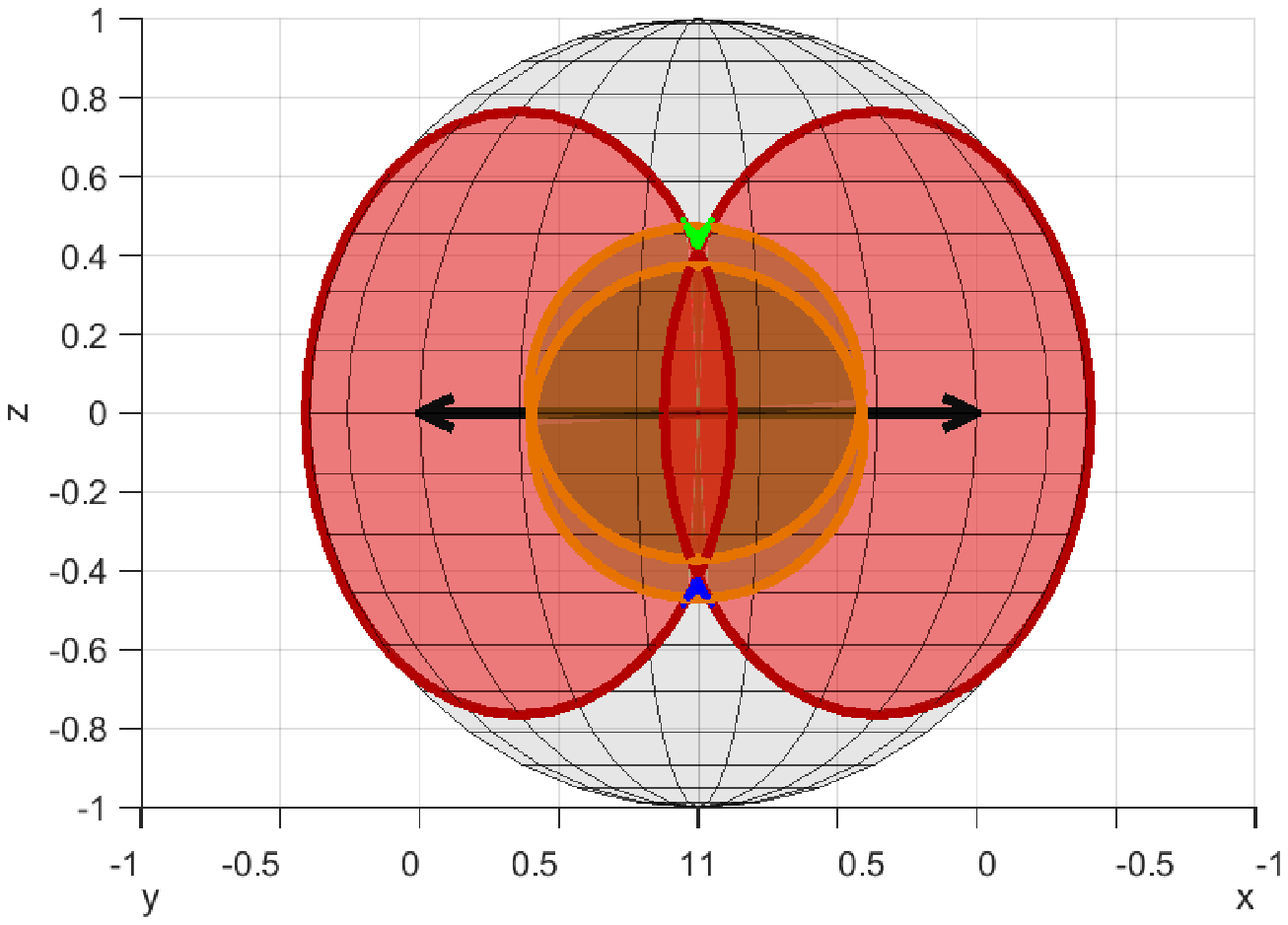}}; 
			\node [anchor=south west] (note) at (2.35,3.05) {$\hat{b}_1$};
			\node [anchor=south west] (note) at (6.5,3.05) {$\hat{b}_2$};
			\node [anchor=south west] (note) at (4.7,4.4) {\color{green}\bf Cluster 1};
			\node [anchor=south west] (note) at (4.7,1.9) {\color{blue}\bf Cluster 2};
		\end{tikzpicture}
		\regularversion{\vspace{-14pt}}\caption{
			Visualization of the constrained reorientation problem studied in Section~\ref{sec:simulations}. The large red cones are the initial unsafe states (where the body cannot point body-fixed vector $\hat{a}$), the two clusters are the states where \eqref{eq:the_qp_conditions} is infeasible, and the brown cones are the states excluded by the new CBFs introduced by Algorithm~\ref{alg:add_cbfs}. 
		}
		\label{fig:attitude}
		\regularversion{\vspace{-10pt}}\extendedversion{\vspace{-6pt}}
	\end{figure}

	We then apply Algorithm~\ref{alg:add_cbfs}. For this system, we coded $\textrm{getCluster()}$ to return connected subsets of $\mathcal{E}$. In this case, $\textrm{getInfeasibleSet()}$ returned states $(q,v)\in\reals^7$ with quaternions $q$ corresponding to the green and blue chevrons in Fig.~\ref{fig:attitude}, and $\textrm{getCluster()}$ identified these points as two clusters. We then coded $\textrm{getCBF()}$ to remove the cluster states by introducing a new constraint of the form $\kappa_{M+1}(q) = q\transpose P(\hat{a},\hat{b}_{M+1},\theta_{M+1}) q$, and computing an SCBF $h_{\kappa,M+1}$ of the same form as $h_{\kappa,1}$ and $h_{\kappa,2}$. The $\textrm{getCBF()}$ function searched for the orientation $\hat{b}_{M+1}$ and minimum angle $\theta_{M+1}$ that removed the entire cluster and that satisfied Definition~\ref{def:noninterfere} when paired with each of the existing SCBFs. Visually, for states with $\omega=0$, the new SCBF $h_{\kappa,M+1}$ resulted in removing one of the brown cones in Fig.~\ref{fig:attitude}. After removing the first cluster, Algorithm~\ref{alg:add_cbfs} repeated for a second loop, and then completed and returned the two initial SCBFs (red cones) and two new SCBFs (brown cones) shown in Fig.~\ref{fig:attitude}.

	\extendedversion{One can also implement Algorithm~\ref{alg:add_cbfs} by hand as in Remark~\ref{rem:choices}. For this system, a practiced engineer might instead decide that both clusters in Fig.~\ref{fig:attitude} should be grouped into a single cluster, which can then be removed by adding a single new SCBF representing a larger cone.}
	
	Note that the computation time of Algorithm~\ref{alg:add_cbfs} depends most on the computation time of the $\textrm{getInfeasibleSet()}$ step, which has to search a potentially large set for infeasibilities. However, this set is smaller than in typical verification algorithms, because we only have to consider states where the boundaries of two or more CBFs intersect. All other states already satisfy Lemma~\ref{lemma:nagumo} because $\mathcal{X}$ is parameterized by CBFs. Possible implementations of $\textrm{getInfeasibleSet()}$ include \cite{compatibility_checking,viable_sublevel_sets}, and the wider viability theory literature. Since these are primarily sampling-based algorithms, the computation time is dependent on the sampling interval, which depends on the Lipschitz constants of $h_k$ and the margin by which each $h_k$ satisfies \eqref{eq:control_set}. The latter is equivalent to the chosen amount of conservatism with which the CBFs are implemented; more conservative margins (e.g. smaller~$\beta_i$) {\color{black}will allow for sparser} sampling and quicker computations. In this case, the results in Fig.~\ref{fig:attitude} took 1052 seconds to compute, and results with a 10x coarser sampling took 0.30 seconds to compute, using a 3.5 GHz processor.

	\section{Conclusion} \label{sec:conclusions}
	
	
	
	We have presented conditions under which multiple CBFs are guaranteed to be jointly feasible in the presence of prescribed input bounds, while allowing each CBF to be designed in a one-at-a-time fashion under more conservative assumptions on the available control authority. However, even under these assumptions, it is still possible for there to exist points where an optimization-based controller with multiple constraints may become infeasible. Thus, we also introduced an algorithm to iteratively remove such points from the allowable state set using additional CBFs until this set becomes a viability domain and the safe controller is always feasible. Future work in this area may include robust and sampled-data extensions of this framework, {\color{black}generalizations to other classes of systems,} or extensions to relate this algorithm to the maximal viability kernel.
	
	\bibliographystyle{ieeetran}
	\bibliography{sources}

\end{document}